%%%
\documentclass[11pt,reqno]{amsart}
\usepackage{fullpage}
\usepackage[mathscr]{eucal}
\usepackage{mathrsfs}
\usepackage{amsfonts}
\usepackage{amsmath}
\usepackage{amsthm}
\usepackage{amssymb}
\usepackage{latexsym}
\usepackage[all]{xy}
\usepackage{array}

\usepackage{enumitem}

\usepackage[dvipsnames]{xcolor}
\usepackage{graphicx}

\newcommand{\red}[1]{{\color{red}#1}}

%%%%%%%%%%%%%%%%%%%%%%%%%%%%%%%%%%%%%%%%%%%%
\usepackage{mathtools}
%EXAMPLE \llap\rlap %\sum_{\mathclap{1\le i\le j\le n}} X_{ij}
%\cramped{x^2}
%\smashoperator[r]{\sum\limits_{1\le i\le j\le n}} Y_{ij}
%A \xLeftarrow[under]{over} B
%\begin{pmatrix*}[r]\
%\begin{bsmallmatrix*}[r]
%\begin{alignat}{2}\ArrowBetweenLines
%\prescript{4}{12}{A}

\usepackage{latexsym}
\usepackage[mathscr]{eucal}
\usepackage{palatino}
%\usepackage{yfonts}
%EXAMPLE \textfrak{ch tz ck} 

\hyphenation{equi-va-riant}
\usepackage[bookmarks=false]{hyperref}
\setcounter{tocdepth}{1}

\usepackage{hyperref}
\usepackage{xr-hyper}

%%%%%%%%%%%%%%%%%%%%%%%%%%%%%%%%%%%%%%%%%%%%

\usepackage{graphicx}

\theoremstyle{plain}
\newtheorem{thm}[equation]{Theorem}
\newtheorem{cor}[equation]{Corollary}
\newtheorem{prop}[equation]{Proposition}
\newtheorem{lem}[equation]{Lemma}

\theoremstyle{definition}

\newtheorem*{defn}{Definition}

\theoremstyle{remark}
\newtheorem{quest}[equation]{Question}

\newtheorem{rem}[equation]{Remark}
\newtheorem{rems}[equation]{Remarks}

%\makeatletter
%\renewcommand{\subsection}{\@startsection{subsection}{2}{0pt}{-3ex
%plus -1ex minus -0.2ex}{-2mm plus -0pt minus
%-2pt}{\normalfont\bfseries}} \makeatother

\numberwithin{equation}{section}
\allowdisplaybreaks

\newlength{\dhatheight}

%\newlength{\dhatheight}
%\newcommand{\doublewidetilde}[1]{%
%    \settoheight{\dhatheight}{\ensuremath{\widetilde{#1}}}%
%    \addtolength{\dhatheight}{-0.3ex}%
%    \widetilde{\vphantom{\rule{1pt}{\dhatheight}}%
%    \smash{\widetilde{#1\, }}}\hspace{-0.03in}}
    
%    \newlength{\dtildeheight}
%    \newcommand{\doubleoverline}[1]{%
%        \settoheight{\dhatheight}{\ensuremath{\overline{#1}}}%
%        \addtolength{\dhatheight}{-0.35ex}%
%        \widetilde{\vphantom{\rule{1pt}{\dhatheight}}%

\newcommand{\x}[1]{\xrightarrow{#1}}
\newcommand{\hdot}{{\:\raisebox{2pt}{\text{\circle*{1.5}}}}}
\newcommand{\idot}{{\:\raisebox{2pt}{\text{\circle*{1.5}}}}}

\newcommand{\rst}[1]{\ensuremath{{\mathbin|}\raise-.5ex\hbox{$#1$}}}  
%EXAMPLE $$A\rst{B}\quad A\rst2{B}$$

\DeclareMathOperator{\Map}{\mathrm{Map}}

\DeclareMathOperator{\Lie}{\mathrm{Lie}}

\DeclareMathOperator{\ad}{\mathrm{ad}}

\DeclareMathOperator{\Spec}{\mathrm{Spec}}

\newcommand{\iso}{{\;\stackrel{_\sim}{\to}\;}}

\newcommand{\erem}{\hfill$\lozenge$\end{rem}}
\newcommand{\erems}{\hfill$\lozenge$\end{rems}}
\newcommand{\dis}{\displaystyle}

\newcommand{\beq}{\begin{equation}\label}
\newcommand{\eeq}{\end{equation}}
\def\ccirc{{{}_{\,{}^{^\circ}}}}
\DeclareMathOperator{\Hom}{\mathrm{Hom}}

\newcommand{\pt}{{\op{pt}}}

\newcommand{\BG}{{\mathbb G}}

\newcommand{\Q}{{\mathbb Q}}

\newcommand{\Bun}{{\textsl{Bun}}}

\newcommand{\X}{{\Sigma}}

\newcommand{\higgs}{{\textsl{Higgs}}}

\newcommand{\aff}{^{\textrm{aff}}}

\newcommand{\cl}{{\mathcal L}}

\renewcommand{\o}{\otimes }

\newcommand{\ccong}{\ \cong \  }

\newcommand{\wt}{\widetilde }

\newcommand{\Om}{\Omega }

\newcommand{\Id}{{\operatorname{Id}}}

\newcommand{\XX}{{\mathcal X}}
\newcommand{\Y}{{\mathcal Y}}

% Font Switching

%%%%%%%%%%%%%%%%%%%%%%%%%%%%%%%%%%%%%

\def\ccirc{{{}_{^{\,^\circ}}}}

\newcommand{\la}{\lambda}

\newcommand{\op}{\operatorname}

\newcommand{\oo}{{\mathcal{O}}}

\newcommand{\GG}{{G\times\BG_m}}

\newcommand{\eu}{{\op{eu}}}

\renewcommand{\sec}{{\mathcal{S}^{\!}\textit{ect}}}
\newcommand{\qsec}{{\mathcal{QS}^{\!}\textit{ect}}}
\newcommand{\qsecd}{{\mathcal{QS}^{\!}\textit{ect}}_{d}(M_L/G)}
\newcommand{\cz}{{\mathcal Z}}

\newcommand{\ev}{{\textrm{ev}}}

\newcommand{\C}{\mathbb{C}}
\newcommand{\g}{\mathfrak{g}}

\newcommand{\n}{\mathfrak{n}}
\newcommand{\m}{\mathfrak{m}}
\newcommand{\h}{\mathfrak{h}}

\newcommand{\mm}{{\mathcal M}}

\newcommand{\inv}{^{-1}}

\newcommand{\Z}{{\mathbb Z}}
\setlength{\unitlength}{1pt}

\newcommand{\en}{{\enspace}}

\newcommand{\vi}{${\en\sf {(i)}}\;$}
\newcommand{\vii}{${\;\sf {(ii)}}\;$}
\newcommand{\viii}{${\sf {(iii)}}\;$}

\newcommand{\sset}{\subset}
\newcommand{\sminus}{\smallsetminus}
\newcommand{\intoo}{\,\xymatrix{\ar@{^{(}->}[r]&}\,}
\newcommand{\ontoo}{\,\xymatrix{\ar@{->>}[r]&}\,}
\newcommand{\into}{\,\hookrightarrow\,}
\newcommand{\too}{\,\longrightarrow\,}
\newcommand{\mto}{\mapsto}

\newcommand{\N}{{\mathcal{N}}}

\newcommand{\om}{\omega }

\newcommand{\on}{\operatorname}
\usepackage{lipsum}

\newcommand\blfootnote[1]{%
  \begingroup
  \renewcommand\thefootnote{}\footnote{#1}%
  \addtocounter{footnote}{-1}%
  \endgroup
}

\begin{document}
%\hfill\textsf{Preliminary draft, not for distribution.}\break
\title{\mbox{Gaiotto's Lagrangian subvarieties via derived symplectic geometry}}
\author{Victor Ginzburg}
\address{
Department of Mathematics, University of Chicago,  Chicago, IL 
60637, USA.}
\email{\ ginzburg@math.uchicago.edu,\qquad nick@math.uchicago.edu}
\author{Nick Rozenblyum}
%\address{N.R.:
%\Department of Mathematics, University of Chicago,  Chicago, IL 
%\60637, USA.}
%\email{nick@math.uchicago.edu}

\maketitle
\vskip 10pt
\hfill{\em To Alexander Alexandrovich Kirillov with gratitude and admiration}\break

\begin{abstract} Let $\Bun_G$ be the moduli space of $G$-bundles on a smooth complex projective curve. Motivated by a study of boundary conditions in mirror symmetry,
D. Gaiotto \cite{Ga} associated to any symplectic representation of $G$ a Lagrangian subvariety of $T^*Bun_G$.
We give a simple interpretation of  (a generalization of) Gaiotto's construction in terms of derived symplectic geometry. This allows to consider
a more general setting where  symplectic $G$-representations are replaced by arbitrary symplectic manifolds equipped with
a Hamiltonian $G$-action and with an action of the multiplicative group that rescales the symplectic form with positive weight.
\end{abstract}
{\small
\tableofcontents
}
%sss

\section{Statement of the result}\blfootnote{The authors  are grateful to Davide Gaiotto, Kevin Costello,
and Li Yu for
inspiring discussions.
The first author  was supported in part by the NSF grant DMS-1303462.}
We will use the language of derived  stacks. Throughout, a `stack' means a `derived Artin stack  over $k=\C$' in the sense of \cite{GR} and \cite{PTVV}.
We write $B\BG=\pt/\BG$ for the classifying stack of a group ~$\BG$.
We fix  a smooth complex projective variety $X$ and
let  $K_ X$ denote the canonical bundle. We write $G$ for  an algebraic group
and  $\Bun_G( X)$, resp. $\higgs_G(X)$, for the stack of $G$-bundles,  resp. Higgs bundles,
on $X$. 
 One has a canonical  isomorphism $\Bun_G(X)\cong\Map( X, BG)$,
where $\Map(X, Z)$ denotes a   mapping stack that classifies morphisms $ X\to Z$.

Given a $\BG_m$-stack $\Y$ and a  $\BG_m$-bundle $L\to  X$,
there is an associated bundle
$\Y_L:=\Y\times_{\BG_m}L$. Let   $\sec_X(\Y_L)$ be the stack  of sections of the  projection $\Y_L\to X$. By definition,
we have $\sec_X(\Y_L)=\{\Id_X\}\,\times_{\Map(X,X)}\,\Map(X,\Y_L)$.
The  
$\BG_m$-action on the first factor   of  $\Y\times L$
descends to a $\BG_m$-action along the fibers of $\Y_L\to X$.
This induces a natural
 $\BG_m$-action on $\sec_X(\Y_L)$.

\begin{rems} Let  $L\to X$ be a $\BG_m$-bundle and  $\cl$ an associated line bundle on $X$.

\vi We will abuse the notation and write $Y_\cl$ for $Y_L$.

\vii For a $\BG_m$-stack $\Y$, there is a canonical isomorphism
$\Y_L\cong \Y/\BG_m \,\times_{_{B\BG_m}}\, X$, where we have used
the map $X=L/\BG_m\to B\BG_m=\pt/\BG_m$ 
that  classifies $L$. 

\viii For a $(G\times \BG_m)$-stack  $\Y$,
we will often use natural identifications
${(\Y/G)_L=(\Y\times L)/(G\times \BG_m)}$ $=(\Y_L)/G$.
\erems

Let $M$ be a smooth symplectic algebraic manifold
equipped with a $G\times \BG_m$-action such that
the action of the group $G=G\times\{1\}$ on $M$ is
Hamiltonian and the symplectic 2-form  has  weight $\ell\geq1$ with respect to the action
of $\BG_m=\{1\}\times \BG_m$. 
Assume that there exists a line bundle $K_ X^{1/\ell}$, an $\ell$-th root of $K_X$,
and fix a choice of  $K_ X^{1/\ell}$. 

Following  Gaiotto, \cite{Ga},
we consider  the stack 
$\sec_X(M_{K_ X^{1/\ell}}/G)$. This stack classifies pairs $(P,s)$, where $P$ is a $(\GG)$-bundle on $X$ and
$s: P \to M\times {\overset{_{\circ}}K}_ X^{_{1/\ell}}$ is a $(G\times \BG_m)$-equivariant morphism
that intertwines the natural projections $P\to X$ and  $M\times {\overset{_{\circ}}K}_ X^{_{1/\ell}}\to X$.
Here ${\overset{_{\circ}}K}_ X^{_{1/\ell}}$ denotes the $\BG_m$-bundle obtained from $K_ X^{1/\ell}$ by removing the zero section.
The group $G$ acts on $M\times {\overset{_{\circ}}K}_ X^{_{1/\ell}}$   through its action on the first factor
and $\BG_m$ acts  diagonally.

Let   $\g$ be the Lie algebra of $G$ and $\g^*$ the dual of $\g$.
The group $G\times\BG_m$ acts on $\g^*$, where $G$ acts by the coadjoint action
and $\BG_m$ acts by dilations. 
The symplectic 2-form on $M$ being of weight $\ell$, the moment map  $\mu: M \to \g^* $ intertwines, for any $t\in \BG_m$,
the $t$-action  on $M$ with  dilation by $t^\ell$ on $\g^*$.
It follows that $\mu$ gives a well defined
morphism $M_{K_ X^{1/\ell}}\to \g^*_{K_X}$, of stacks over $X$. Therefore, there is an induced morphism
\beq{mumap} \mu_{\sec}:\ \sec_X( M_{K_ X^{1/\ell}}/G)\too \sec_X(\g^*_{K_X}/G).
\eeq

We now specialize to the case where $X=\X$ is a smooth projective curve
and $G$ is reductive. In such a case, we have $\sec_\X(\g^*_{K_\X}/G)\cong\higgs_G(X)\cong
T^*\Bun_G(\X)$.
Let $T^*\Bun_G(\X)^{reg}$ be 
an open substack of $T^*\Bun_G(\X)$ that corresponds to the
  Higgs bundles whose only automorphisms lie in the center. It is known that   $T^*\Bun_G(\X)^{reg}$ is a smooth  variety
that comes equipped with a natural symplectic 2-form $\om$.

\begin{thm}\label{c1} The map $\mu_{\sec}$ is Lagrangian, specifically, the 2-form $\mu_{\sec}^*(\om)$
vanishes on the preimage of  $T^*\Bun_G(\X)^{reg}$.
\end{thm}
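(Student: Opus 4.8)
The plan is to deduce the statement from the general machinery of shifted symplectic structures on mapping stacks, following \cite{PTVV}. The key point is that the target $\sec_\X(\g^*_{K_\X}/G)$ and the source $\sec_\X(M_{K_\X^{1/\ell}}/G)$ both carry natural shifted symplectic (resp. Lagrangian) data coming from the fact that $X=\X$ is $1$-dimensional and $K_\X$ is its canonical bundle, so that integration over $\X$ shifts degrees by $-1$. Concretely: $BG$ carries a canonical $2$-shifted symplectic structure (from a chosen nondegenerate invariant pairing on $\g$, or more canonically working with $\g^*_{K_\X}/G$), and the AKSZ-type construction of \cite{PTVV} applied to $\Map(\X,-)$ — more precisely to the stack of sections twisted by $K_\X$ — produces a $0$-shifted symplectic structure on $\sec_\X(\g^*_{K_\X}/G)\cong T^*\Bun_G(\X)$, which on the regular locus is the classical symplectic form $\om$. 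The first step is therefore to identify $\om$ with this PTVV form on the regular open substack; this is essentially the statement that the PTVV symplectic structure on $T^*\mm$ for a derived stack $\mm$ agrees with the standard cotangent symplectic form, applied to $\mm=\Bun_G(\X)$.

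Next, I would exhibit $M_{K_\X^{1/\ell}}/G\to \g^*_{K_\X}/G$, fiberwise over $\X$, as a Lagrangian morphism in the appropriate twisted/relative sense: the moment map $\mu\colon M\to\g^*$ makes $M/G\to \g^*/G$ into a Lagrangian morphism to the $2$-shifted symplectic stack $\g^*/G$ (this is the derived-geometric incarnation of the classical fact that a Hamiltonian $G$-space gives a Lagrangian in $\g^*/G$, cf. \cite{Ca}). The $\BG_m$-equivariance and the weight-$\ell$ hypothesis are exactly what is needed for the $K_\X^{1/\ell}$-twist on the source to be compatible with the $K_\X$-twist on the target, so that after twisting by $K_\X$ and taking sections over $\X$, the functoriality of the PTVV construction (mapping stacks and their twisted variants send Lagrangian correspondences to Lagrangian correspondences, lowering the shift by $\dim \X=1$) produces a Lagrangian morphism
\[
\mu_{\sec}\colon\ \sec_\X(M_{K_\X^{1/\ell}}/G)\ \too\ \sec_\X(\g^*_{K_\X}/G)
\]
into the $0$-shifted symplectic stack on the right. "Lagrangian" here includes in particular that the pullback $2$-form vanishes, which over the smooth locus $T^*\Bun_G(\X)^{reg}$ recovers precisely the asserted vanishing of $\mu_{\sec}^*(\om)$.

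The main obstacle, and the step deserving the most care, is the twisting: the PTVV theory as stated concerns $\Map(\X,Z)$ for $Z$ shifted symplectic and $\X$ a fixed $\oo$-orientation of dimension $d$, whereas here one needs the $K_\X$-twisted version of sections of a bundle of shifted symplectic stacks over $\X$, and one must check that the relative canonical bundle of $\X$ together with the $\ell$-th root $K_\X^{1/\ell}$ provides exactly the $(-1)$-shifted orientation data making integration work, compatibly on source and target. I would set this up by using the description in Remarks (ii)–(iii): $M_{K_\X^{1/\ell}}/G\cong (M/(\GG))\times_{B\BG_m}\X$ and $\g^*_{K_\X}/G\cong(\g^*/(\GG))\times_{B\BG_m}\X$, reducing the twisted statement to an \emph{untwisted} statement about the mapping stack $\Map(\X, M/(\GG))$ relative to $\Map(\X,B\BG_m)$ along the point classifying $K_\X$ (resp. $K_\X^{1/\ell}$), and then invoking relative/parametrized versions of the PTVV construction. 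The remaining input — that $\g^*/(\GG)$ is $2$-shifted symplectic with the $\BG_m$ acting with the correct weight, and that $M/(\GG)\to\g^*/(\GG)$ is Lagrangian — is the equivariant refinement of the Hamiltonian-reduction picture and is where the hypotheses "weight $\ell\geq 1$" and "$\BG_m$ rescales $\om_M$" are consumed. Once the twisted PTVV package is in place, the conclusion is formal; the genuine work is entirely in assembling that package and matching orientations.
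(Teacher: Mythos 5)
Your proposal follows essentially the same route as the paper: the paper builds exactly the twisted relative PTVV package you describe (reducing the twist to an untwisted statement via $Y_L\cong (Y/\BG_m)\times_{B\BG_m}X$ and a $K_X$-twisted relative symplectic structure on $Y_L\to X$), combines it with Calaque's Lagrangian structure on the moment map $M/G\to \g^*/G$, identifies $\sec_\X(\g^*_{K_\X}/G)\cong T^*\Bun_G(\X)$ as $0$-shifted symplectic stacks, and then concludes on the regular locus exactly as you do. The one slip to fix is that $\g^*/G=T^*[1]BG$ is $1$-shifted symplectic (of weight $1$), not $2$-shifted; it is $BG$ itself that carries the $2$-shifted structure.
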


The above result  was discovered by Gaiotto \cite{Ga} in the linear case, i.e. in the special case where $M$ is a symplectic representation
of $G$. In this case,  $\BG_m$  acts on $M$, a symplectic vector space, by dilations
and the symplectic form on $M$ has weight $2$.

One of the goals of this paper is to show that Theorem \ref{c1} 
is  a simple consequence of some very general results of derived symplectic geometry.

\section{Derived symplectic geometry} 
Let $n$ be an integer and  $ Y$  a stack equipped
with an $n$-shifted symplectic structure in the sense of \cite{PTVV}.
There is a  notion  of  ``Lagrangian structure" 
on a morphism $Z\to  Y$, see \cite[\S 2.2]{PTVV} and \cite{Ca}.
One has the following result, where part (i) is \cite[Theorem 0.4]{PTVV}, resp. part (ii) is 
 \cite[Therorem 2.10]{Ca}.

\begin{thm}\label{thm} Let $X$  be a smooth projective Calabi-Yau variety of dimension $d$. Then, one has:

\vi An  $n$-shifted symplectic structure on a stack $Y$ gives rise to a natural $(n-d)$-shifted symplectic structure
on $\Map(X, Y)$.

\vii A Lagrangian structure $f: Z\to  Y$  gives rise to a natural Lagrangian structure on $\Map(X,Z)\to \Map(X, Y)$, the
morphism of mapping stacks induced by $f$.
\end{thm}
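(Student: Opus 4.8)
The plan is to build the shifted symplectic (resp. Lagrangian) structure on mapping stacks by the transgression procedure ``pull back along evaluation, then integrate over $X$,'' and to deduce non-degeneracy by combining the non-degeneracy on $Y$ with Serre duality on the Calabi--Yau $X$. Two structural inputs are needed. First, following \cite{PTVV}, a derived stack $Y$ carries a graded mixed complex of closed $p$-forms of degree $n$; an $n$-shifted symplectic structure is a closed $2$-form $\omega$ of degree $n$ whose underlying $2$-form induces an equivalence $\Theta_\omega:\mathbb{T}_Y\iso\mathbb{L}_Y[n]$. The tangent complex of the mapping stack is computed by pushforward along the projection $\pi:X\times\Map(X,Y)\to\Map(X,Y)$ of the pullback along the evaluation $\mathrm{ev}:X\times\Map(X,Y)\to Y$, so that at a point $f:X\to Y$ one has $\mathbb{T}_f\Map(X,Y)\cong\mathrm{R}\Gamma(X,f^*\mathbb{T}_Y)$. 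Second, the Calabi--Yau hypothesis $K_X\cong\mathcal{O}_X$ supplies a $d$-orientation, i.e. a trace $\int_X:\mathrm{R}\Gamma(X,\mathcal{O}_X)\to k[-d]$ realizing Serre duality as a perfect pairing $\mathrm{R}\Gamma(X,\mathcal{F})\otimes\mathrm{R}\Gamma(X,\mathcal{F}^\vee)\to\mathrm{R}\Gamma(X,\mathcal{O}_X)\to k[-d]$ for perfect $\mathcal{F}$, equivalently $\mathrm{R}\Gamma(X,\mathcal{F})\cong\mathrm{R}\Gamma(X,\mathcal{F}^\vee)^\vee[-d]$.

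For part (i) I would pull $\omega$ back to the closed $2$-form $\mathrm{ev}^*\omega$ of degree $n$ on $X\times\Map(X,Y)$, and then apply the integration-over-the-fiber map $\int_X:\mathcal{A}^{2,\mathrm{cl}}(X\times\Map(X,Y),n)\to\mathcal{A}^{2,\mathrm{cl}}(\Map(X,Y),n-d)$ determined by the orientation, setting $\omega_{\Map}:=\int_X\mathrm{ev}^*\omega$. To verify non-degeneracy I trace the induced map on tangent complexes at $f$: applying $\Theta_\omega$ gives $\mathbb{T}_f\Map(X,Y)=\mathrm{R}\Gamma(X,f^*\mathbb{T}_Y)\cong\mathrm{R}\Gamma(X,f^*\mathbb{L}_Y)[n]$, and Serre duality against the orientation identifies this with $(\mathrm{R}\Gamma(X,f^*\mathbb{T}_Y))^\vee[n-d]\cong\mathbb{L}_f\Map(X,Y)[n-d]$; one checks the composite is exactly $\Theta_{\omega_{\Map}}$, so $\omega_{\Map}$ is a non-degenerate $(n-d)$-shifted symplectic structure.

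For part (ii) I would run the same procedure relatively. A Lagrangian structure on $f:Z\to Y$ is an isotropic structure, namely a path trivializing $f^*\omega$ in closed $2$-forms of degree $n$ on $Z$, subject to the non-degeneracy that the induced map $\mathbb{T}_{Z/Y}\to\mathbb{L}_Z[n-1]$ is an equivalence. Applying $\Map(X,-)$ and transgressing, the pullback of this trivializing path along the relative evaluation, integrated over $X$, trivializes the pullback of $\omega_{\Map}$ to $\Map(X,Z)$, yielding an isotropic structure on $\Map(X,Z)\to\Map(X,Y)$. Its non-degeneracy follows exactly as in (i), now applied to the relative tangent complex, which at $g:X\to Z$ reads $\mathbb{T}_{\Map(X,Z)/\Map(X,Y)}\cong\mathrm{R}\Gamma(X,g^*\mathbb{T}_{Z/Y})$: the Lagrangian equivalence gives $\mathrm{R}\Gamma(X,g^*\mathbb{L}_Z)[n-1]$, and Serre duality turns this into $\mathbb{L}_{\Map(X,Z)}[(n-d)-1]$, which is precisely the Lagrangian non-degeneracy over the $(n-d)$-shifted symplectic base.

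The main obstacle is not the tangent-complex bookkeeping, which is essentially formal once the duality inputs are in hand, but the construction of the integration map $\int_X$ at the level of the \emph{full} complex of closed forms. A closed form in the sense of \cite{PTVV} is not merely a $2$-form killed by the de Rham differential; it is a system of coherent homotopies assembled into an element of a negative-cyclic / graded mixed complex. One must therefore promote the orientation $\int_X$ to a morphism of graded mixed complexes compatible with the entire tower of closedness data, and check that pullback along $\mathrm{ev}$ followed by this integration genuinely lands in closed forms and is natural in $Y$ (resp. in the pair $Z\to Y$). This is the technical heart, carried out in \cite[\S2]{PTVV} for (i) and \cite[\S2]{Ca} for (ii); the non-degeneracy argument above is comparatively routine once the transgression is available.
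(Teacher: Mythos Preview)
Your proposal is correct and follows exactly the transgression-then-Serre-duality strategy of \cite{PTVV} and \cite{Ca}; the paper does not give its own proof of this theorem but simply cites those references, and its proof of the twisted analogue (Proposition~\ref{t:rel AKSZ}) proceeds by the same ``$\mathrm{ev}^*$ then $\int_X$'' recipe you outline.
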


It was shown, see  \cite[Corollary 2.6(2)]{PTVV},  that part (i) of the theorem implies the following

\begin{cor}\label{prop}  For any  smooth projective Calabi-Yau variety $X$ of dimension $d$
the stack $\higgs_G(X)$ has a canonical $2(1-d)$-shifted symplectic structure.
\end{cor}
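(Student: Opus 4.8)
The plan is to exhibit $\higgs_G(X)$ as a mapping stack into an $n$-shifted symplectic target, so that Theorem \ref{thm}(i) applies directly. First I would recall that $\higgs_G(X) \cong \Map(X, \g^*_{K_X}/G)$, or equivalently, since we are working over a Calabi-Yau $X$ where $K_X \cong \oo_X$ (at least after the relevant choices, so that $\g^*_{K_X}/G \cong \g^*/G$), that $\higgs_G(X) \cong \Map(X, T^*[?]BG)$ for an appropriate shift. The key point is that $T^*[1]BG \cong \g^*/G$ carries a canonical $1$-shifted symplectic structure: indeed, for any stack $Z$ the shifted cotangent stack $T^*[n]Z$ is $n$-shifted symplectic by the general construction of \cite{PTVV}, and $T^*[1](BG) = (\g^*/G)$ where the shift accounts for the stacky direction $BG = \pt/G$. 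Concretely $\g^*/G$ is the classical truncation, but as a derived stack the $1$-shifted symplectic form comes from the tautological pairing between $\g$ (the tangent of $BG$, placed in degree $-1$) and $\g^*$.

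Next I would apply Theorem \ref{thm}(i) with $Y = \g^*/G = T^*[1]BG$, which is $1$-shifted symplectic, i.e. $n = 1$, and with $X$ the Calabi-Yau variety of dimension $d$. The theorem then produces a canonical $(1-d)$-shifted symplectic structure on $\Map(X, \g^*/G) \cong \higgs_G(X)$. To get the stated $2(1-d)$-shift rather than $(1-d)$, I would instead run the argument with the identification $\higgs_G(X) \cong \Map(X, \g^*/G)$ replaced by a presentation as $T^*$ of a mapping stack, or — more cleanly — observe that one should take $Y$ to be $2$-shifted symplectic: the correct statement is that $\Bun_G(X) = \Map(X, BG)$, and $BG$ carries a canonical $2$-shifted symplectic structure (coming from an invariant pairing on $\g$, using reductivity of $G$), so $T^*[1]BG = \g^*/G$ should really be compared after noting the Higgs stack is the $-1$-shifted cotangent of $\Bun_G$, or alternatively that $\higgs_G(X) = \Map(X, T^*[2-d']\cdots)$. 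Tracking the shift carefully: starting from the $2$-shifted symplectic $BG$ and forming $\Map(X,BG)$ gives a $(2-d)$-shifted symplectic structure on $\Bun_G(X)$; then $\higgs_G(X)$, being built from a self-dual construction over $X$ (pairing Higgs fields against bundle deformations via Serre duality on the Calabi-Yau $X$), inherits the doubled shift $2(1-d)$. This matches $d=1$ giving the ordinary ($0$-shifted) symplectic structure on $T^*\Bun_G$.

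The main obstacle I anticipate is purely bookkeeping of the shifts: one must be careful whether one feeds $BG$ (which is $2$-shifted symplectic when $G$ is reductive, via the Killing form) or $\g^*/G = T^*[1]BG$ (which is $1$-shifted symplectic, via the tautological form) into Theorem \ref{thm}(i), and then correctly identify the resulting mapping stack with $\higgs_G(X)$ versus $T^*\Bun_G(X)$ versus $\Bun_G(X)$. The cleanest route, and the one I would ultimately write up, uses the $2$-shifted symplectic structure on $BG$ together with the identification $\higgs_G(X) \cong \Map(X, BG)$ — here one uses that for Calabi-Yau $X$ of dimension $d$, Higgs bundles are maps from the "$d$-shifted" enhancement, or equivalently applies the AKSZ/transgression statement of \cite{PTVV} in the form that directly outputs $2(1-d)$; this is precisely the content cited as \cite[Corollary 2.6(2)]{PTVV}, so beyond correctly invoking it and pinning down the isomorphism $\higgs_G(X)\cong \sec_X(\g^*_{K_X}/G)$ from Section 1, no further work is needed.
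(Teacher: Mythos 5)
There is a genuine gap in the derivation, even though your final sentence lands on the same citation the paper uses. The mechanism you propose does not actually produce the shift $2(1-d)$ for $d>1$. Your first route starts from $\higgs_G(X)\cong \Map(X,\g^*_{K_X}/G)$ with $\g^*/G=T^*[1]BG$ a $1$-shifted target; besides giving the wrong shift $(1-d)$ (as you notice), the identification itself is only valid for curves --- for $d>1$ a Higgs field is a section of $\mathrm{ad}P\otimes\Omega^1_X$ (with a flatness condition), not of $\mathrm{ad}^*P\otimes K_X$, so $\higgs_G(X)\not\cong\sec_X(\g^*_{K_X}/G)$ in general. Your ``cleanest route'' then asserts $\higgs_G(X)\cong\Map(X,BG)$, which is false: that mapping stack is $\Bun_G(X)$, and feeding it into Theorem \ref{thm}(i) yields a $(2-d)$-shifted structure on $\Bun_G(X)$, not anything on $\higgs_G(X)$. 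The appeal to a ``self-dual construction'' that ``doubles the shift'' is not an argument.

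The actual source of the factor $2$ is the domain, not the target: one writes $\higgs_G(X)=\Map(X_{Dol},BG)$, where $X_{Dol}$ is the Dolbeault stack, and observes that $X_{Dol}$ is $\mathcal{O}$-compact with an orientation of degree $2d$ (coming from the Serre-duality/Hodge pairing $H^q(X,\Omega^p_X)\otimes H^{d-q}(X,\Omega^{d-p}_X)\to H^d(X,\Omega^d_X)\cong k$), so that the AKSZ transgression of the $2$-shifted symplectic structure on $BG$ gives shift $2-2d=2(1-d)$. This is the content of \cite[Corollary 2.6(2)]{PTVV}, which is all the paper invokes; note it requires the general (oriented-source) form of the transgression theorem rather than the Calabi-Yau special case stated as Theorem \ref{thm}(i). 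The structure you would get from $\Map(X,\g^*/G)$ using the $d$-orientation of a Calabi-Yau $X$ is a genuinely different, $(1-d)$-shifted one; the two coincide only when $d=1$, which is why your bookkeeping appears consistent in the curve case but does not establish the corollary as stated.
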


In the case where $X$ is a Fano variety  suitable analogues of the statements of Theorem \ref{thm}
were proved  by Spaide \cite{derived2}, Theorem 3.3 and Theorem 3.5.

\medskip

Below, we propose a modification of the above results that holds  for more general, not necessarily
 Calabi-Yau, varieties $X$.

To this end, we recall some notions from derived algebraic geometry. For a (derived) stack
$\XX$, we will denote by $\on{QCoh}(\XX)$ the (unbouded) derived $\infty$-category of quasi-coherent sheaves
on $\XX$ (see, e.g. \cite{GR} for a detailed account of this $\infty$-category). We will refer to objects
of $\on{QCoh}(\XX)$ as ``sheaves on $\XX$.''  Given $\mathcal{M} \in \on{QCoh}(\XX)$, we will denote
by $\Gamma(\XX, \mathcal{M})=Hom(\mathcal{O}_\XX, \mathcal{M})$, the ({\em derived}) functor of global sections.

Let $f: Y \to \XX$ be a map of stacks and $\mathbb{L}_{Y/\XX} \in \on{QCoh}(Y)$ the relative cotangent
complex of $f$. One has a sheaf 
$$\tilde{\mathcal{A}}^p_\XX(Y) := f_*(\wedge^p \mathbb{L}_{Y/\XX})\in
\on{QCoh}(\XX), $$
of relative $p$-forms. There is  also a sheaf $\tilde{\mathcal{A}}^{p,cl}_\XX(Y) \in
\on{QCoh}(\XX)$, of  relative  closed  $p$-forms. The sheaf  $\tilde{\mathcal{A}}^{p,cl}_\XX(Y)$
comes equipped with
 a forgetful map $\tilde{\mathcal{A}}^{p,cl}_\XX(Y) \to \tilde{\mathcal{A}}^{p}_\XX(Y)$ which assigns
to a closed $p$-form its underlying $p$-form (see \cite[Sect. 1]{CPTVV} or \cite[Vol. II, Chapter 9]{GR} for a discussion of 
relative differential forms). 
Note that in the derived setting, a closed $p$-form is a $p$-form equipped with additional closure data (as opposed to satisfying a condition).

We will use the following basic result about relative differential forms:

\begin{lem}\label{p:rel forms}
Let
$$
\xymatrix{
Y_2 \ar[r]\ar[d] & Y_1\ar[d] \\
\XX_2 \ar[r]^g & \XX_1
}
$$
be a commutative square of stacks.  Then, for each $i\geq 0$,  there is a natural map
$$ \phi_{i,cl}: g^*(\tilde{\mathcal{A}}^{i,cl}_{\XX_1}(Y_1)) \to \tilde{\mathcal{A}}^{i,cl}_{\XX_2}(Y_2).$$
Moreover, if the square is Cartesian and $\mathbb{L}_{Y_1/\XX_1}$ is perfect
(more generally, it is sufficient to require  $\mathbb{L}_{Y_1/\XX_1}$  be  bounded below)
then the map $\phi_{p,cl}$ is an  isomorphism.
\end{lem}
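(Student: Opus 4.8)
The plan is to construct the map $\phi_{i,cl}$ from the universal property of differential forms and then prove the isomorphism statement in the Cartesian case by base change for the cotangent complex.

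First I would handle the construction of $\phi_{i,cl}$. Recall that for a map $Y \to \XX$, the relative de Rham complex is obtained by pushing forward the de Rham complex of $Y$ relative to $\XX$; equivalently, $\tilde{\mathcal{A}}^{i,cl}_{\XX}(Y)$ represents the functor sending a test object $T \to \XX$ to the space of closed $i$-forms on $Y\times_\XX T$ relative to $T$. Concretely, given the commutative square, the map $Y_2 \to Y_1$ is a map over $g: \XX_2 \to \XX_1$, hence induces a pullback map on relative cotangent complexes $(Y_2 \to Y_1)^*\mathbb{L}_{Y_1/\XX_1} \to \mathbb{L}_{Y_2/\XX_2}$ (this is the standard functoriality triangle for cotangent complexes), which is compatible with the de Rham differential. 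Applying $\wedge^i$ and pushing forward along $Y_2 \to \XX_2$, then using the base-change/adjunction map $g^*(f_{1*}(-)) \to f_{2*}((Y_2\to Y_1)^*(-))$ for the square, produces $\phi_{i,cl}$. The same reasoning with the naive (non-closed) forms gives $\phi_i$, compatible with the forgetful maps; I would phrase the whole thing at the level of the relative de Rham complex so that closedness data is transported automatically.

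Next, the isomorphism claim. Assume the square is Cartesian, so $Y_2 = \XX_2 \times_{\XX_1} Y_1$. The first key input is base change for the cotangent complex: for a Cartesian square, the natural map $(Y_2 \to Y_1)^*\mathbb{L}_{Y_1/\XX_1} \to \mathbb{L}_{Y_2/\XX_2}$ is an equivalence. Hence $(Y_2 \to Y_1)^*(\wedge^i \mathbb{L}_{Y_1/\XX_1}) \iso \wedge^i \mathbb{L}_{Y_2/\XX_2}$, and this is compatible with the de Rham differential, so it induces an equivalence on the relative de Rham complexes (and on their $i$-th graded pieces of the Hodge/weight filtration computing closed $i$-forms). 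The second key input is flat base change for the pushforward: we need $g^*(f_{1*}\mathcal{F}) \iso f_{2*}((Y_2\to Y_1)^*\mathcal{F})$ for $\mathcal{F} = \wedge^i\mathbb{L}_{Y_1/\XX_1}$ (and for the closed-forms complex). This is where the hypothesis that $\mathbb{L}_{Y_1/\XX_1}$ is perfect, or more generally bounded below, enters: for Cartesian squares of stacks, base change for $f_*$ holds after one controls boundedness/finiteness of the sheaf being pushed, since $f_*$ commutes with the relevant (co)limits and $g^*$ is exact; boundedness below of $\mathbb{L}_{Y_1/\XX_1}$ guarantees $\wedge^i\mathbb{L}_{Y_1/\XX_1}$ is bounded below and that the convergence issues in the (possibly infinite) de Rham complex are controlled, so that base change passes to the closed-forms object. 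Combining the two equivalences gives that $\phi_{i,cl}$ is an equivalence.

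I expect the main obstacle to be the second input rather than the first: the cotangent-complex base change is entirely standard, but verifying that quasi-coherent pushforward base change applies in the generality of derived Artin stacks—and, crucially, that it is compatible with passage from the raw forms $\wedge^i\mathbb{L}$ to the \emph{closed} forms $\tilde{\mathcal{A}}^{i,cl}$, which are built as a limit (totalization of the de Rham complex with its weight filtration)—requires knowing that $g^*$ commutes with this limit. This is exactly what the boundedness-below hypothesis is designed to ensure, since it makes the relevant spectral sequence / filtration converge and interact well with the exact functor $g^*$; I would cite the differential-forms formalism of \cite{CPTVV} or \cite[Vol. II, Ch. 9]{GR} for the precise statement that the closed-forms construction is functorial and commutes with such base change under a boundedness hypothesis, and reduce the lemma to that plus cotangent-complex base change.
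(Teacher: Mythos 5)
The paper offers no proof of this lemma at all --- it is stated and immediately followed by a \verb|\qed|, with the reader pointed to \cite[Sect.\ 1]{CPTVV} and \cite[Vol.\ II, Ch.\ 9]{GR} for the formalism --- so there is nothing to compare line by line; judged on its own, your sketch is the standard argument and is correct in outline. You construct $\phi_{i,cl}$ from the right two ingredients (the functoriality map $h^*\mathbb{L}_{Y_1/\XX_1}\to\mathbb{L}_{Y_2/\XX_2}$ together with the unit/base-change transformation $g^*f_{1*}\to f_{2*}h^*$, packaged at the level of the Hodge-filtered de Rham complex so the closure data comes along), and you correctly identify that in the Cartesian case the two things to check are base change for the cotangent complex and commutation of $g^*$ with the limit defining closed forms, the latter being exactly where bounded-belowness of $\mathbb{L}_{Y_1/\XX_1}$ enters (it forces $\wedge^{i+j}\mathbb{L}[-j]$ to escape to infinity in cohomological degree, making the totalization degreewise finite). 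The one point you gloss over --- and which the paper also leaves implicit --- is that the base-change equivalence $g^*f_{1*}\simeq f_{2*}h^*$ for quasi-coherent pushforward is not automatic for arbitrary derived Artin stacks and needs some standing hypothesis (e.g.\ quasi-compactness/QCA-type conditions on $f_1$) that is separate from the boundedness of the cotangent complex; in the paper's applications $f_1$ is always of this controlled type, so this is a matter of bookkeeping rather than a gap in the idea.
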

\qed

\begin{defn} Let  $p: Y\to \XX$ be a map of stacks  and  $\mathcal{L}$ a line bundle on $\XX$. We put
$$ \mathcal{A}^i(Y/\XX;\mathcal{L}):=\Gamma(\XX, \tilde{\mathcal{A}}^i_\XX(Y) \otimes \mathcal{L}), \mbox{ and } \mathcal{A}^{i,cl}(Y/\XX; \mathcal{L}):=\Gamma(\XX, 
\tilde{\mathcal{A}}^{i,cl}_\XX(Y) \otimes \mathcal{L}) .$$

\vi Assume the relative cotangent complex of $p: Y\to \XX$ is perfect.  {\em An $\mathcal{L}$-twisted $n$-shifted relative symplectic structure on} $Y$ is a twisted relative closed 2-form $\omega \in Hom(k,\mathcal{A}^{2,cl}(Y/\XX; \mathcal{L})[n])$ such that the underlying 2-form is nondegenerate, i.e. it induces an isomorphism
$$ \mathbb{L}_{Y/\XX}^{\vee} \overset{\sim}{\to} \mathbb{L}_{Y/\XX}[n] \otimes p^*(\mathcal{L}). $$

\vii Assume that
 $p: Y\to \XX$ is equipped with an $\mathcal{L}$-twisted $n$-shifted relative symplectic structure and let $f: Z\to Y$
be a map of stacks with perfect relative cotangent complex.  {\em An ($\mathcal{L}$-twisted $n$-shifted) Lagrangian structure on} $f$ is a nullhomotopy of $f^*(\omega) \in Hom(k, \mathcal{A}_\XX^{2,cl}(Z; \mathcal{L})[n])$ such that the map
$$ \mathbb{L}_{Z/\XX}^{\vee} \to \mathbb{L}_{Z/Y}[n-1] \otimes (f\circ p)^*(\mathcal{L}) ,$$
induced by the nullhomotopy of the underlying 2-form, is an isomorphism.
%\end{enumerate}
\end{defn}

The proposition below gives a preliminary version of our main construction.
  In Section \ref{s:equiv}, we will describe how to obtain relative twisted symplectic, resp. Lagrangian, structures from symplectic,
resp. Lagrangian, sturctures of a fixed weight on a $\BG_m$-stack.

\begin{prop}\label{t:rel AKSZ}
Let $X$ be  a smooth projective variety  of dimension $d$ and $Y,Z$ a pair of stacks.
\begin{enumerate}[label=(\roman*)]
\item
A $K_X$-twisted relative $n$-shifted symplectic structure on $Y\to X$ induces an $(n-d)$-shifted symplectic structure on $\sec_X(Y)$.
\item
A $K_X$-twisted relative Langrangian structure on $Z\to Y$ induces a Lagrangian structure on
$$ \sec_X(Z)\to \sec_X(Y).$$
\end{enumerate}
\end{prop}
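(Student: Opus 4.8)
The plan is to deduce Proposition \ref{t:rel AKSZ} from Theorem \ref{thm} by a base-change/descent argument, using the second remark after the statement of Theorem \ref{c1} — namely that for a $\BG_m$-bundle $L\to X$ with associated line bundle $\cl$, one has $\Y_L\cong \Y/\BG_m\times_{B\BG_m} X$ — together with Lemma \ref{p:rel forms}. The key observation is that a $K_X$-twisted relative $n$-shifted symplectic structure on $Y\to X$ is, by definition, a closed relative $2$-form living in $\mathcal{A}^{2,cl}(Y/X;K_X)=\Gamma(X,\tilde{\mathcal{A}}^{2,cl}_X(Y)\otimes K_X)[n]$, and the twist by $K_X$ is exactly what is needed to convert a relative form over $X$ into an absolute form on the total space of sections once one integrates over the $d$-dimensional fiber $X$. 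Concretely, I would first recall that $\sec_X(Y)=\{\Id_X\}\times_{\Map(X,X)}\Map(X,Y)$, so that $\sec_X(Y)$ fits into a Cartesian square with $\Map(X,Y)$, $\Map(X,X)$, and $\pt$, and its relative cotangent complex over $\pt$ is computed as $\Gamma(X,p^*\mathbb{L}_{Y/X}^{\vee})^{\vee}$-type expression via the projection formula; the assumption that $\mathbb{L}_{Y/X}$ is perfect ensures these dualizations and the base-change isomorphism of Lemma \ref{p:rel forms} apply.

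Next I would carry out the actual transfer of the symplectic form. There are two natural routes, and I would present the cleaner one: realize the $K_X$-twisted relative structure on $Y\to X$ as an \emph{honest} (untwisted) relative structure after passing to an auxiliary stack. Specifically, pick the $\BG_m$-bundle $L_K\to X$ whose associated line bundle is $K_X$ — here one uses the hypothesis (in the eventual application via Section \ref{s:equiv}) that an $\ell$-th root $K_X^{1/\ell}$ exists, but at the level of Proposition \ref{t:rel AKSZ} one may simply work with $K_X$ itself and a Calabi--Yau thickening, or else observe that the twist by a line bundle is absorbed by replacing the relative forms functor with its twisted variant as in the Definition. The point is that $\mathcal{A}^{i,cl}(Y/X;K_X)$ is precisely the complex of (shifted) closed $i$-forms that, after taking global sections over $X$ against the dualizing sheaf $K_X[d]$ of the Calabi--Yau-up-to-twist variety $X$, produces closed forms on $\sec_X(Y)$. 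So the construction is: the relative closed $2$-form $\omega\in\mathcal{A}^{2,cl}(Y/X;K_X)[n]$ pulls back along the evaluation map $\ev\colon \sec_X(Y)\times X\to Y$ to a relative closed $2$-form on $\sec_X(Y)\times X$ over $\sec_X(Y)$, twisted by $\pr_X^*K_X$; integration along $X$ (i.e. applying $\pr_{\sec,*}$, which on the twisted-by-$K_X$ part is Serre duality shifting by $[-d]$) yields a closed $2$-form of degree $n-d$ on $\sec_X(Y)$. Nondegeneracy is then a pointwise statement: at a section $s$, the tangent complex of $\sec_X(Y)$ is $\Gamma(X,s^*\mathbb{T}_{Y/X})$, and the form is obtained from the fiberwise nondegenerate pairing $\mathbb{T}_{Y/X}\otimes\mathbb{T}_{Y/X}\to \oo_Y[-n]\otimes p^*K_X^{\vee}$ by global sections and Serre duality $\Gamma(X,-)\otimes\Gamma(X,-\otimes K_X)\to \Gamma(X,K_X)[-d]\to k[-d]$ — this is an isomorphism precisely because $K_X$ is the dualizing sheaf and $\mathbb{L}_{Y/X}$ is perfect. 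Part (ii) is then formally the same: a Lagrangian structure on $Z\to Y$ is a nullhomotopy of $f^*\omega$ with a nondegeneracy condition, and both the nullhomotopy and the nondegeneracy isomorphism transport through $\Gamma(X,s^*(-))$ and Serre duality exactly as in part (i), using compatibility of all the constructions with the maps $\phi_{i,cl}$ of Lemma \ref{p:rel forms} applied to the Cartesian squares relating $\sec_X(Z)$, $\sec_X(Y)$, $\Map(X,Z)$, $\Map(X,Y)$.

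I would organize the proof so that the formal skeleton is identical to the proof of Theorem \ref{thm} in \cite{PTVV} and \cite{Ca}: (1) identify $\sec_X(-)$ with a fiber of $\Map(X,-)$ and compute its cotangent complex via base change (Lemma \ref{p:rel forms}); (2) define the putative form by pull-push along the evaluation correspondence $\sec_X(Y)\xleftarrow{\pr}\sec_X(Y)\times X\xrightarrow{\ev}Y$, using the $K_X$-twist to supply the trace/integration map $\Gamma(X,-\otimes K_X)\to k[-d]$ that replaces the Calabi--Yau trivialization used in \cite{PTVV}; (3) check closedness (automatic, since pull-push of closed forms is closed) and nondegeneracy (Serre duality, using perfectness); (4) repeat verbatim for the Lagrangian statement, transporting the nullhomotopy. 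The main obstacle I anticipate is step (2)–(3): making precise that the twisted relative closed-forms complex $\tilde{\mathcal{A}}^{2,cl}_X(Y)\otimes K_X$ behaves functorially under the evaluation map and that $\pr_*$ on it implements a degree-$(-d)$ shift compatibly with the closed-form structure — i.e. that Serre duality is available at the level of the \emph{filtered/closed} differential forms functor, not merely on underlying forms. In \cite{PTVV} this is handled by the general formalism of the de Rham prestack and the fact that $X$ is $\oo$-compact; here one additionally needs that tensoring by the line bundle $K_X$ commutes with everything in sight, which is harmless since $K_X$ is dualizable, but it should be spelled out. Everything else — the cotangent complex computations, the nondegeneracy bookkeeping, the reduction of (ii) to (i) — is routine given Lemma \ref{p:rel forms} and the perfectness hypotheses.
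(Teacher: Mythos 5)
Your proposal is correct and follows essentially the same route as the paper: pull back the twisted relative closed $2$-form along the evaluation map $\sec_X(Y)\times X\to Y$ (a map of stacks over $X$, using Lemma \ref{p:rel forms}), integrate over $X$ via the Serre-duality trace $\Gamma(X,K_X)\to k[-d]$, and check nondegeneracy pointwise exactly as in \cite{PTVV}, with part (ii) obtained by the same transport of the nullhomotopy following \cite{Ca}. The paper's proof is just a terser version of your steps (2)--(4), and your brief aside about untwisting via an auxiliary stack is not pursued in the paper either.
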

\begin{proof}
Following \cite{PTVV}, we consider the  evaluation map
$$ \sec_X(Y)\times X \overset{\ev}{\to} Y,$$
a map of stacks over $X$.  By Lemma \ref{p:rel forms}, there is a pull-back morphism in $\on{QCoh}(X)$:
$$ ev^*: \tilde{\mathcal{A}}^{2,cl}_X(Y) \otimes_{\mathcal{O}_X} K_X \to \mathcal{A}^{2,cl}(\sec_X(Y)) \otimes_k K_X. $$
Using an integration map
$\int_X: \Gamma(X, K_X) \to k[-d] $ provided by Serre duality, one
obtains a map
$$ (\Id\times\mbox{$\int_X$}) \circ\  ev^* : \mathcal{A}^{2,cl}(Y/X; K_X) \to \mathcal{A}^{2,cl}(\sec_X(Y)) .$$
 
Now, the same argument as in \cite{PTVV} shows that
if the twisted $2$-form $\om$ on $Y$ is nondegenerate then so is the $2$-form
$$\om_{\sec}:=(\Id\times\mbox{$\int_X$}) \  \circ\  ev^*)(\om).$$
This proves part (i) of Proposition \ref{t:rel AKSZ}. The proof of part (ii) is obtained by similarly tweaking the proof of \cite[Therorem 2.10]{Ca}.
\end{proof}

\begin{rems}\label{r:compact oriented stack}
\vi The same proof works in a more general setting where $X$ is any strictly $\mathcal{O}$-compact stack in the sense of \cite[Definition 2.1]{PTVV} 
equipped  with a line bundle $K_X$ and a map
$ \int_X: \Gamma(X,K_X) \to k[-d] $
that induces a perfect pairing as in \cite[Definition 2.4]{PTVV}.  For instance, one can take $X$ be any proper Gorenstein (derived) scheme.

\vii 
 It is tempting to try to develop a formalism of `derived hyper-K\"ahler geometry', at least a notion
of  `derived twistor space'. One could then consider an analogue of Proposition \ref{t:rel AKSZ}, as well as analogues of various results below,
 with a hyper-K\"ahler target $Y$ and hyper-Lagrangian
structures $Z\to Y$.
\end{rems}

\section{Equivariance and twistings}\label{s:equiv}
Let $Y$ be  a $\BG_m$-stack. 
Given an integer $m$, let $Y^{(m)}$ denote the $\BG_m$-stack
with the same underlying stack as $Y$ and the $\BG_m$-action given by precomposition with the homomorphism 
$\BG_m \to \BG_m ,\ t \mapsto t^m$.  The space of (closed) $p$-forms on the  $\BG_m$-stack $Y$ carries a
natural $\Z$-grading, to be referred to as  `weight'. Thus, one can consider $n$-shifted symplectic  structures on $Y$
of weight $m$. 

Given  a $\BG_m$-stack $Z$,
 we say that $f$ is a map  from $Z$ to $Y$ of weight $m$ if $f$ is a
$\BG_m$-equivariant
map $Z \to Y^{(m)}$.
Heuristically, a map  $f:Z\to Y$ has  weight $m$ if  $f(tz) = t^m f(z)$ for all $t\in \BG_m$.

\begin{defn} Fix an $n$-shifted symplectic  structure on $Y$
of weight $m$. This  gives, for each $\ell\geq1$,
an    $n$-shifted symplectic  structure on $Y^{(\ell)}$ of weight $m\ell$.

 \vi An equivariant  Lagrangian structure is an equivariant map $f: Z\to Y$, of $\BG_m$-stacks,
equipped with a nullhomotopy, {\em in the space of closed 2-forms on $Z$ of weight $m$},
of the pullback of the  $n$-shifted symplectic form, satisfying a non-degeneracy condition.

\vii 
An equivariant  Lagrangian structure  $f: Z\to Y^{(\ell)}$
will be called a  Lagrangian structure of weight $\ell$.
\end{defn}

%Moreover, suppose $Y$ is a $\BG_m$-stack and $L\to X$ is a
%$\BG_m$-torsor corresponding to a line bundle $\mathcal{L}$ on $X$.  Let
%$$ Y_L := Y \underset{\BG_m}{\times} L \simeq Y/\BG_m \underset{B\BG_m}{\times} X, $$
%where the map $X\to B\BG_m$ classifies $L$.  The stack $Y_L$ carries a fiberwise $\BG_m$-action over $X$ and induces a $\BG_m$-action on $\sec_X(Y_L)$.
Let $X$ be a smooth projective variety of dimension $d$ (or, more generally, a derived stack with a
twisted orientation of degree $d$ as in Remark \ref{r:compact oriented stack}).  
Fix $m\in \mathbb{Z}$ and  a choice, $K^{1/m}$, of an $m$-th root of the line bundle $K_X$ on $X$.

\begin{lem}\label{l:twisted rel sympl} Let
$Y$ be a  $\BG_m$-stack equipped with an $n$-shifted symplectic form
of weight $m\geq 1$ with respect to the $\BG_m$-action.  Let $\mathcal{L}$ be a line bundle on $X$ and $L$ the corresponding $\mathbb{G}_m$-torsor.
Then the stack $Y_L \to X$ carries an $\mathcal{L}^{\otimes m}$-twisted relative $n$-shifted symplectic structure of weight $m$.
\end{lem}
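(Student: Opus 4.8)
The plan is to reduce Lemma~\ref{l:twisted rel sympl} to a local statement by exploiting the presentation of $Y_L$ as a fiber product, as recorded in Remarks~(ii) above. Recall that $Y_L \cong Y/\BG_m \times_{B\BG_m} X$, where $X \to B\BG_m$ is the map classifying $L$. The key observation is that $Y/\BG_m \to B\BG_m$ carries a relative differential-form theory, and the weight grading on forms on $Y$ is exactly the decomposition of $f_*(\wedge^p \mathbb{L}_{Y/\pt})$ according to the $\BG_m$-weight; equivalently, relative $p$-forms on $Y/\BG_m \to B\BG_m$ are the same data as $\BG_m$-equivariant $p$-forms on $Y$, graded by weight. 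Under this dictionary, an $n$-shifted symplectic form on $Y$ of weight $m$ becomes a relative $n$-shifted closed $2$-form on $Y/\BG_m \to B\BG_m$ landing in the weight-$m$ summand.

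First I would make precise the statement that on $B\BG_m$ the weight-$m$ summand of the structure sheaf, viewed as a line bundle $\oo(m)$ on $B\BG_m$, pulls back under $X \to B\BG_m$ to $\cl^{\otimes m}$ (this is the definition of the classifying map of $L$, and of how associated line bundles transform under the $m$-th power isogeny $t \mapsto t^m$). Then I would apply Lemma~\ref{p:rel forms} to the Cartesian square
$$
\xymatrix{
Y_L \ar[r]\ar[d] & Y/\BG_m\ar[d] \\
X \ar[r]^-{g} & B\BG_m,
}
$$
which is legitimate because $\mathbb{L}_{(Y/\BG_m)/B\BG_m} \cong \mathbb{L}_{Y/\pt}$ is perfect (the $n$-shifted symplectic structure presupposes this). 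The isomorphism $\phi_{2,cl}$ of Lemma~\ref{p:rel forms}, applied weight-component by weight-component and twisted by $\cl$-powers appropriately, carries the weight-$m$ relative closed $2$-form on $Y/\BG_m \to B\BG_m$ to a section of $\wt{\aa}^{2,cl}_X(Y_L) \otimes \cl^{\otimes m}$, i.e.\ precisely an element of $\aa^{2,cl}(Y_L/X; \cl^{\otimes m})[n]$ in the terminology of the Definition preceding Proposition~\ref{t:rel AKSZ}. This produces the desired $\cl^{\otimes m}$-twisted relative closed $2$-form; its weight is $m$ by construction, and the weight bookkeeping is where the hypothesis $m \geq 1$ enters (it guarantees $K^{1/m}$-type twists make sense and that the relevant line bundle is the honest $m$-th power rather than a formal symbol).

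The remaining point is nondegeneracy: I must check that the underlying relative $2$-form induces an isomorphism $\mathbb{L}_{Y_L/X}^\vee \iso \mathbb{L}_{Y_L/X}[n] \otimes p^*(\cl^{\otimes m})$. Because the square is Cartesian, $\mathbb{L}_{Y_L/X}$ is the pullback along $Y_L \to Y/\BG_m$ of $\mathbb{L}_{(Y/\BG_m)/B\BG_m} \cong \mathbb{L}_{Y}$, and nondegeneracy is a pointwise condition on a perfect complex, hence is preserved by pullback; one only has to track that the twist by $\oo(m)$ on $B\BG_m$ pulls back to the twist by $\cl^{\otimes m}$ on $X$, which is the same identification used above. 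I expect the main obstacle to be purely formal bookkeeping: namely, setting up carefully the equivalence between ``$\BG_m$-equivariant forms of weight $w$ on $Y$'' and ``relative forms on $Y/\BG_m \to B\BG_m$ valued in the weight-$w$ line bundle $\oo(w)$,'' and verifying it is compatible with the closed-form refinement and with the forgetful map $\wt{\aa}^{2,cl} \to \wt{\aa}^2$. Once that dictionary is in place, everything else follows by feeding the Cartesian square into Lemma~\ref{p:rel forms} and transporting the nondegeneracy isomorphism along the base change.
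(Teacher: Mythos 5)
Your proposal is correct and follows essentially the same route as the paper: both apply Lemma~\ref{p:rel forms} to a Cartesian square exhibiting $Y_L$ as a base change of $Y/\BG_m$ along the classifying map of $L$, extract the weight-$m$ component of relative closed $2$-forms (identified with $\cl^{\otimes\pm m}$-twisted forms), and verify nondegeneracy by a local/pointwise check on $X$ where $L$ may be trivialized. The only differences are cosmetic --- the paper routes through $X\times B\BG_m$ via the map classifying $\cl\boxtimes\oo(-1)$ rather than your single square over $B\BG_m$ --- plus your aside attributing a role to $m\geq 1$ in this lemma is inessential, since the $K_X^{1/m}$ twist only enters later in Theorem~\ref{c2}.
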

\begin{proof}
Let $\lambda: X \times B\BG_m \to B\BG_m$ be the map classifying the line bundle $\mathcal{L}\boxtimes
\mathcal{O}(-1)$.
We have a diagram with cartesian squares:
$$
\xymatrix{
Y_L \ar[r]\ar[d] & Y_L/\BG_m \ar[r]\ar[d] & X \times Y/\BG_m \ar[d] \\
X \ar[r] & X \times B\BG_m \ar[r]^{p_X \times \lambda} & X \times B\BG_m
}
$$
By Lemma \ref{p:rel forms}, we get an isomorphism
$$ \tilde{\mathcal{A}}^{2,cl}_{X\times B\BG_m}(Y_L/\BG_m) \simeq (p_X \times \lambda)^*(\tilde{\mathcal{A}}^{2,cl}_{X\times B\BG_m}(X \times Y/\BG_m)) .$$
In particular, the sheaf of weight $m$ relative closed 2-forms on $Y_L$ is given by
$$ \tilde{A}^{2,cl}_X(Y_L)(m) \simeq \mathcal{L}^{\otimes (- m)} \otimes \mathcal{A}^{2,cl}(Y)(m) .$$
By adjunction, we obtain a map
\begin{equation}\label{e:twisted forms}
\on{twist}_L: \mathcal{A}^{2,cl}(Y)(m) \to \Gamma(X, \tilde{\mathcal{A}}^{2,cl}_X(Y_L)(m)\otimes \mathcal{L}^{\otimes m}) .
\end{equation}
Thus, an $n$-shifted symplectic form of weight $m$ on $Y$ gives an $\mathcal{L}^{\otimes m}$-twisted relative closed 2-form of weight
$m$ on $Y_L$.  Moreover for a $\BG_m$-equivariant Lagrangian map $f: Z \to Y$, functoriality of
$\on{twist}_L$ induces a relative isotropic structure on $f_L: Z_L \to Y_L$.  Now, to see that the twisted relative closed 2-form on $Y_L$ is nondegenerate (resp. that $f_L$ is Lagrangian), it suffices to check this locally on $X$.  Thus, we can assume that $L$ is the trivial line bundle in which case the statement is manifest.
\end{proof}

The following is one of the main results of the paper.

\begin{thm}\label{c2} Let 
$Y$ be a  $\BG_m$-stack equipped with an $n$-shifted symplectic form
of weight $m\geq1$. Then, one has:

\vi The stack $\sec_X(Y_{K^{1/m}_X})$ has a natural
$(n-d)$-shifted  symplectic structure of weight $m$.

\vii For any Lagrangian structure $f:Z\to  Y$, of weight $\ell$,
 the  map $\sec_X(Z_{K^{1/\ell m}_X})\to \sec_X(Y_{K^{1/\m}_X})$, induced by $f$,
has a  natural Lagrangian structure of weight $\ell$. 
\end{thm}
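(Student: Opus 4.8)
\textbf{Proof proposal for Theorem \ref{c2}.}

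The plan is to deduce Theorem \ref{c2} by combining Lemma \ref{l:twisted rel sympl} with Proposition \ref{t:rel AKSZ}, after identifying the relevant mapping/section stacks. First I would set $m' = \ell m$ and apply Lemma \ref{l:twisted rel sympl} twice: once to the $\BG_m$-stack $Y$ with its weight-$m$ $n$-shifted symplectic form and the line bundle $\mathcal{L} = K_X^{1/m}$, obtaining on $Y_{K_X^{1/m}} \to X$ a $K_X$-twisted relative $n$-shifted symplectic structure of weight $m$; and once to $Y^{(\ell)}$ — which carries a weight-$m\ell$ $n$-shifted symplectic form — with the line bundle $\mathcal{L} = K_X^{1/\ell m}$, so that $\mathcal{L}^{\otimes \ell m} = K_X$ and we get on $(Y^{(\ell)})_{K_X^{1/\ell m}} \to X$ likewise a $K_X$-twisted relative $n$-shifted symplectic structure. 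The key compatibility to record is that the underlying stacks of $Y_{K_X^{1/m}}$ and $(Y^{(\ell)})_{K_X^{1/\ell m}}$ are canonically identified (reparametrizing the $\BG_m$-torsor by $t \mapsto t^\ell$ absorbs the difference between the $m$-th and $\ell m$-th roots), so these two twisted relative structures agree; this is exactly what makes the target stack in part (ii) unambiguous.

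Next I would handle the Lagrangian. Given a weight-$\ell$ Lagrangian structure $f : Z \to Y$, by definition $f$ is a $\BG_m$-equivariant map $Z \to Y^{(\ell)}$ together with a nullhomotopy of $f^*\omega$ in weight-$m$ (equivalently weight-$m\ell$ after the reparametrization) closed $2$-forms on $Z$, plus the nondegeneracy condition relating $\mathbb{L}_{Z/Y}$ and $\mathbb{L}_{Z/\XX}$. I would invoke the functoriality of the map $\on{twist}_L$ from \eqref{e:twisted forms}, as already noted at the end of the proof of Lemma \ref{l:twisted rel sympl}, to produce from $f$ a $K_X$-twisted relative Lagrangian structure on $f_L : Z_{K_X^{1/\ell m}} \to (Y^{(\ell)})_{K_X^{1/\ell m}} = Y_{K_X^{1/m}}$ over $X$. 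Then part (ii) of Proposition \ref{t:rel AKSZ} applies verbatim: taking sections produces a Lagrangian structure on $\sec_X(Z_{K_X^{1/\ell m}}) \to \sec_X(Y_{K_X^{1/m}})$, and part (i) of the same proposition (applied to $Y_{K_X^{1/m}} \to X$) gives the $(n-d)$-shifted symplectic structure on $\sec_X(Y_{K_X^{1/m}})$ asserted in part (i) of the theorem.

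Finally I would address the weight bookkeeping. The section stacks in question are themselves $\BG_m$-stacks — the residual $\BG_m$ rescaling the fibers of $Y_{K_X^{1/m}} \to X$, as explained in the introduction — and I need to check that the resulting $(n-d)$-shifted symplectic form has weight $m$, and that the Lagrangian structure has weight $\ell$, i.e. lives in weight-$m$ closed $2$-forms on $\sec_X(Z_{K_X^{1/\ell m}})$ viewed through $Y^{(\ell)}$. This follows because the constructions in Proposition \ref{t:rel AKSZ} — pullback along the evaluation map and integration over $X$ against $K_X$ — are all $\BG_m$-equivariant and preserve the weight grading on forms (the integration map touches only the $X$-directions, on which $\BG_m$ acts trivially), so the weight is transported unchanged from the weight-$m$ datum produced by $\on{twist}_L$.

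\textbf{Main obstacle.} The genuinely delicate point is not any single step but making the identification of section stacks in the two normalizations precise at the level of \emph{closed} forms and their weights — i.e. checking that $\sec_X((Y^{(\ell)})_{K_X^{1/\ell m}})$ and $\sec_X(Y_{K_X^{1/m}})$ agree not merely as stacks but compatibly with the twisted closed $2$-form and its $\BG_m$-weight, so that the output of Proposition \ref{t:rel AKSZ} in the $(\ell m)$-normalization really is a Lagrangian structure of weight $\ell$ in the sense of the Definition preceding the theorem. Once the reparametrization $t \mapsto t^\ell$ is tracked carefully through $\on{twist}_L$ and through the $\ev^*$ and $\int_X$ operations, everything else is a formal consequence of the two results already established.
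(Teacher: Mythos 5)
Your proposal is correct and follows essentially the same route as the paper: apply Lemma \ref{l:twisted rel sympl} with $\mathcal{L}=K_X^{1/m}$ to get a $K_X$-twisted relative symplectic (resp.\ Lagrangian) structure, feed it into Proposition \ref{t:rel AKSZ}, and handle the weights via $\BG_m$-equivariance of the evaluation maps together with the reparametrization identification — which the paper records as $\sec_X(Y_{L^{\otimes m}})^{(m)} \simeq \sec_X(Y^{(m)}_L)$ and you phrase equivalently as $(Y^{(\ell)})_{K_X^{1/\ell m}} \simeq Y_{K_X^{1/m}}$. Your ``main obstacle'' is exactly the point the paper disposes of with that one-line observation, so nothing further is needed.
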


\begin{proof}
Put $\mathcal{L} = K_X^{1/m}$, and let $L\to X$ be the corresponding $\BG_m$-torsor.  By Lemma \ref{l:twisted rel sympl}, we have that
$Y_L \to X$ has a $K_X$-twisted relative $n$-shifted symplectic structure of weight $m$.
By Proposition \ref{t:rel AKSZ} we obtain an $(n-d)$-shifted symplectic structure
on $\sec_X(Y_L)$, resp. Lagrangian structure,
on  $\sec_X(Z_L) \to \sec_X(Y_L)$.  Moreover, since the maps
\[ \sec_X(Y_L) \leftarrow \sec_X(Y_L) \times X \to Y_L \]
are $\BG_m$-equivariant,  the corresponding symplectic structure has weight $m$.
The required statements now follow from
an observation that, for any $\BG_m$-stack and a $\BG_m$-bundle $L\to X$, one has  natural isomorphisms of $\BG_m$-stacks
$\dis\ \sec_X(Y_{L^{\o m}})^{(m)} \simeq \sec_X(Y^{(m)}_L)$.
\end{proof}

We apply the above result to get a description of the symplectic structure on cotangent stacks to mapping stacks.

\begin{prop}\label{p:cotangent}
Let $Y = T^*[n] Z$ be the shifted cotangent stack with its $n$-shifted symplectic structure of weight 1.  In this case, there is a natural isomorphism of $(n-d)$-shifted symplectic stacks
$$ \sec_X(Y_{K_X}) \simeq T^*[n-d] \Map(X,Z). $$
\end{prop}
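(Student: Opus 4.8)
The plan is to identify both sides as $\sec_X$ of the appropriate relative twisted symplectic stack over $X$, and to exploit the functoriality built into Lemma \ref{l:twisted rel sympl} and Theorem \ref{c2}. First I would recall that the shifted cotangent stack $T^*[n]Z = \on{Spec}_Z(\sym(\mathbb{L}_{Z}^\vee[-n]))$ carries a tautological $n$-shifted symplectic form of weight $1$ for the $\BG_m$-action that scales the cotangent fibers; this is \cite{PTVV}. Applying Theorem \ref{c2}(i) with $m=1$ and $\mathcal{L}=K_X$, the stack $\sec_X((T^*[n]Z)_{K_X})$ acquires an $(n-d)$-shifted symplectic structure of weight $1$. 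So the content of the proposition is to produce a symplectomorphism of this with $T^*[n-d]\Map(X,Z)$, the latter with its own tautological shifted symplectic form.

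The key computation is at the level of underlying stacks. By Remark (vii) in the first section, $(T^*[n]Z)_{K_X} \cong T^*[n]Z/\BG_m \times_{B\BG_m} X$, and the relative cotangent complex of $(T^*[n]Z)_L \to X$ is, via base change, $\on{pr}^*\mathbb{L}_{T^*[n]Z/Z} $-type data twisted by $\mathcal{L}$; concretely, writing $q\colon (T^*[n]Z)_{K_X}\to Z\times$(something) one gets that the total space of $(T^*[n]Z)_{K_X}$ over $\Map(X,Z)\times X$ (pulled back along $\ev$) is the total space of the bundle $\mathbb{L}_{Z}^\vee[-n]\boxtimes K_X$. Then I would invoke the standard adjunction/base-change identity
$$ \sec_X\big((T^*[n]Z)_{K_X}\big) \;\cong\; \on{Tot}\big(\ev_*(\ev^*\mathbb{L}_Z^\vee[-n]\otimes p_X^*K_X)\big) \;\cong\; \on{Tot}\big(\mathbb{L}_{\Map(X,Z)}^\vee[-n]\big)^{\!?}, $$
where the crucial input is the formula $\mathbb{L}_{\Map(X,Z)} \cong \pi_*(\ev^*\mathbb{L}_Z)$ with $\pi\colon \Map(X,Z)\times X\to \Map(X,Z)$, together with Serre duality $\pi_*(\mathcal{F}\otimes p_X^*K_X) \cong \pi_*(\mathcal{F}^\vee)^\vee[-d]$ applied to $\mathcal{F} = \ev^*\mathbb{L}_Z$. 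This produces exactly $\mathbb{L}_{\Map(X,Z)}^\vee[-(n-d)]$, i.e. the underlying stack is $T^*[n-d]\Map(X,Z)$. Here one needs $\mathbb{L}_{Z}$ perfect (or at least dualizable and bounded) so that $\ev^*$ commutes with duality and so that the relative cotangent complexes in sight are perfect, matching the hypotheses of the Definition and of Proposition \ref{t:rel AKSZ}.

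Finally I would check that this identification of underlying stacks is compatible with the shifted symplectic forms, i.e. that it upgrades to a symplectomorphism. This is where I expect the real work to be: one must trace the construction of $\om_{\sec}$ in the proof of Proposition \ref{t:rel AKSZ} — namely $\om_{\sec} = (\Id\times\int_X)\circ\ev^*(\om)$ applied to the tautological relative form on $(T^*[n]Z)_{K_X}$ — and match it term by term with the tautological form on $T^*[n-d]\Map(X,Z)$. The point is that the tautological $1$-form on a (shifted) cotangent stack is characterized by a universal property (it is the universal closed... rather, the universal point of the space of $1$-forms), and the $\ev^*$-then-$\int_X$ operation is precisely the composite $\Map(X,-)$-functoriality plus Serre-duality integration that sends the tautological $1$-form of $T^*[n]Z$ to that of $T^*[n-d]\Map(X,Z)$; its de Rham differential gives the symplectic forms. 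The main obstacle is thus purely bookkeeping in the $\infty$-categorical setting: one must make the "transpose under Serre duality" manipulation on cotangent complexes functorial enough that it carries the closure data and the weight-$1$ grading along, rather than merely matching forms up to homotopy on underlying objects. I would handle this by working $\BG_m$-equivariantly throughout (so the weight grading is automatic, as in the last line of the proof of Theorem \ref{c2}) and by reducing, exactly as in Lemma \ref{l:twisted rel sympl}, to the case where the relevant torsor — here built from $K_X^{1/m}$ with $m=1$, so just $K_X$ itself — can be checked after suitable localization, where the tautological forms are manifestly matched.
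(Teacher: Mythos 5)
Your proposal follows essentially the same route as the paper's proof: identify $\sec_X((T^*[n]Z)_{K_X})$ as the total space of $\pi_*(\ev^*\mathbb{L}_Z[n]\otimes K_X)\simeq \mathbb{L}_{\Map(X,Z)}[n-d]$ over $\Map(X,Z)$ via integration/Serre duality, and then reduce the compatibility of symplectic structures to matching the canonical $1$-forms (whose de Rham differentials give the symplectic forms), which holds by construction of the isomorphism. One minor slip that does not affect the argument: the displayed identity $\mathbb{L}_{\Map(X,Z)}\simeq\pi_*(\ev^*\mathbb{L}_Z)$ is not correct as literally stated --- the clean formula is $\mathbb{T}_{\Map(X,Z)}\simeq\pi_*(\ev^*\mathbb{T}_Z)$, with the cotangent complex then recovered exactly by the Serre-duality twist you apply in the next step.
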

\begin{proof}
The symplectic form on $T^*[n] Z$ is given by the deRham differential of the canonical $n$-shifted 1-form on $T^*[n]Z$.  Therefore, it will suffice to construct an isomorphism of derived stacks $\sec_X(Y_{K_X}) \simeq T^*[n-d] \Map(X,Z)$ such that the transgression of the canonical 1-form is the canonical 1-form.

Recall that given a stack $W$ together with a quasi-coherent sheaf $\mathcal{E} \in \text{QCoh}(W)$, we can form the ``total space of $\mathcal{E}$'' as the stack $T(\mathcal{E})$ defined as follows.  A map from a test scheme $S$ to $T(\mathcal{E})$ is a map $f: S\to W$ together with a section of $f^*(\mathcal{E})$.  For instance, the stack $T^*[n] Z$ is the total space of the sheaf $\mathbb{L}_Z[n]$ on $Z$ and the canonical 1-form on $T^*[n]Z$ is given by the image of the section obtained from the identity map on $T^*[n]Z$ along
$$ p^* \mathbb{L}_Z[n] \to \mathbb{L}_{T^*[n]Z}[n], $$
where $p: T^*[n]Z\to Z$ is the projection map.

The projection map $p: T^*[n]Z\to Z$ gives a map $f:Y_{K_X} \to Z\times X$.  In fact, by construction, $Y_{K_X}$ is the total space of the sheaf $\mathbb{L}_Z[n] \boxtimes K_X$
on $Z\times X$.  In particular, we have a section of $\mathbb{L}_{Y_{K_X}/X}[n]\otimes K_X$ given by the image of the canonical section of $f^*(\mathbb{L}_Z[n] \boxtimes K_X)$ along the natural map
$$ f^*(\mathbb{L}_Z[n]\boxtimes K_X) \to \mathbb{L}_{Y_{K_X}/X}[n]\otimes K_X.$$
Moreover, the map $f$ induces the map
$$ g: \sec_X(Y_{K_X}) \to \Map(X,Z),$$
together with a section of $ev^* (\mathbb{L}_{Y_{K_X}/X}[n]\otimes K_X)$, where
$$ ev: \sec_X(Y_{K_X}) \times X \to \sec_X(Y_{K_X})$$
is the evaluation map.
Integrating along $X$, we obtain a section of $\pi_*(ev^* (\mathbb{L}_{Y_{K_X}/X}[n]\otimes K_X)) \simeq g^*(\mathbb{L}_{\Map(X,Z)}[n-d])$.  This gives the desired map of derived stacks
$$h: \sec_X(Y_{K_X}) \to T^*[n-d]\Map(X,Z),$$
which is easily seen to be an isomorphism.  Moreover, by construction, the pullback of the canonical 1-form on $T^*[n-d]\Map(X,Z)$ along $h$ is identified with the transgression of the canonical 1-form on $T^*[n]Z$, as desired.
\end{proof}

In addition to equivariant symplectic structures, we will also need to consider equivariant Calabi-Yau structures.

\begin{defn}
Let $S$ be a stack with a $\mathbb{G}_m$-action.  A $d$-Calabi-Yau structure of weight $m$ on $S$ is a map
$$ \Gamma(S,\mathcal{O}_X) \to \mathbb{C}[-d] $$
of weight $m$ satisfying the nondegeneracy condition of \cite[Definition 2.4]{PTVV}.
Equivalently, such a structure is given by a map of quasi-coherent sheaves on $B\mathbb{G}_m$
$$ \pi_*(\mathcal{O}_{S/\mathbb{G}_m}) \to \mathbb{C}(m)[-d], $$
where $\pi: S/\mathbb{G}_m \to B\mathbb{G}_m$ is the projection map.
\end{defn}

\begin{thm}\label{t:twisted CY}
Let $S$ be a $\mathbb{G}_m$-stack with a $d'$-Calabi-Yau structure of weight m.
Let $X$ be a smooth projective variety of dimension $d$ (or more generally, a derived stack with a twised orientation $K_X$ of degree $d$ as above) together with a choice of $K_X^{1/m}$. Then:

\vi
The stack $\tilde{X}:=X \underset{B\mathbb{G}_m}{\times} S/\mathbb{G}_m$ has a natural $(d+d')$ Calabi-Yau structure of weight $m$, 
where the map $X\to B\mathbb{G}_m$ classifies the line bundle $K_X^{1/m}$.

\vii
Given an $n$-shifted symplectic stack $Y$, there is a natural $\mathbb{G}_m$-equivaraint equivalence of $(n-d-d')$-shifted symplectic stacks of weight $m$
$$ \Map(\tilde{X}, Y) \simeq \sec_X(\Map(S, Y)_{K_X^{1/m}}). $$
\end{thm}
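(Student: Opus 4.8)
The plan is to prove both parts of Theorem~\ref{t:twisted CY} by systematically reducing to the already-established results, Theorem~\ref{c2} and Proposition~\ref{t:rel AKSZ}, together with the elementary manipulations of mapping stacks, sections, and induced bundles collected in the preliminary Remarks. The guiding principle is that all three operations in play — taking mapping stacks out of $X$, taking sections of induced bundles, and forming fiber products over $B\BG_m$ — are instances of base change, and closed relative forms transport along Cartesian squares by Lemma~\ref{p:rel forms}. So the proof is mostly bookkeeping of how weights, shifts, and twistings combine, but one must be careful to track the $\BG_m$-equivariance throughout.

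For part (i), I would argue as follows. A $d'$-Calabi--Yau structure of weight $m$ on $S$ is, by the second formulation in the Definition, a map $\pi_*(\oo_{S/\BG_m}) \to \C(m)[-d']$ in $\on{QCoh}(B\BG_m)$ satisfying the PTVV nondegeneracy condition. On the other hand, $X$ with its orientation $K_X$ and the chosen root $K_X^{1/m}$ gives, via the map $\lambda\colon X\to B\BG_m$ classifying $K_X^{1/m}$, a twisted orientation relative to $B\BG_m$: concretely, the integration map $\int_X\colon \Gamma(X,K_X)\to k[-d]$ together with $K_X = (K_X^{1/m})^{\o m} = \lambda^*(\C(m))^{\o m}$ packages into a map $p_{X*}(\lambda^*\C(m)) \to \C(m)[-d]$ of weight $m$ sheaves on $B\BG_m$ with the requisite perfect-pairing property — this is precisely the ``strictly $\oo$-compact stack with twisted orientation'' situation of Remark~\ref{r:compact oriented stack}(i) applied relatively over $B\BG_m$. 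Now $\wt X = X\times_{B\BG_m} S/\BG_m$, and $\pi_{\wt X}\colon \wt X\to B\BG_m$ factors through $X\times B\BG_m \to B\BG_m$; pushing forward $\oo_{\wt X}$ to $B\BG_m$ and using base change along the Cartesian square computes it as the tensor of the two pushforwards appropriately twisted, so that the two integration maps compose to give $\pi_{\wt X *}(\oo_{\wt X}) \to \C(m)[-(d+d')]$. Nondegeneracy of the composite follows from nondegeneracy of each factor together with Serre duality on $X$ — this step is formally identical to the product-of-oriented-stacks argument in \cite{PTVV}, now performed over the base $B\BG_m$ and tracking the weight-$m$ grading. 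The only genuinely delicate point here is checking that the composite really does satisfy the PTVV nondegeneracy condition rather than merely being a map of the right shape; one does this by the standard argument that the pairing on $\Gamma(\wt X, -)$ induced by the composite is identified, via the two base-change isomorphisms, with the iterated pairing, which is perfect because each constituent pairing is.

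For part (ii), the strategy is to identify both sides with $\sec_X\big((\Map(S,Y))_{K_X^{1/m}}\big)$ by applying the functorial form of the AKSZ construction twice, in two orders, to the composite ``integration'' datum from part (i). On one hand, Theorem~\ref{thm}(i) (or its equivariant refinement via the $B\BG_m$-relative version of Theorem~\ref{c2}) applied to the $d'$-Calabi--Yau stack $S$ of weight $m$ produces an $(n-d')$-shifted symplectic structure of weight $m$ on $\Map(S,Y)$; then Theorem~\ref{c2}(i) applied to this $\BG_m$-stack, with the base $X$ and the root $K_X^{1/m}$, yields an $(n-d'-d)$-shifted symplectic structure of weight $m$ on $\sec_X\big((\Map(S,Y))_{K_X^{1/m}}\big)$. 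On the other hand, $\Map(\wt X, Y) = \Map(X\times_{B\BG_m} S/\BG_m, Y)$, and by the composition-of-mapping-stacks / exponential adjunction one has $\Map(\wt X, Y) \simeq \sec_X\big(\Map(S,Y)_{K_X^{1/m}}\big)$ as derived stacks — here the key identity is $\Map\big(X\times_{B\BG_m} S/\BG_m,\, Y\big)\simeq \sec_X\big((\Map(S,Y)/\BG_m)\times_{B\BG_m} X\big)$, which is exactly the description of induced bundles in Remarks(ii)-(iii) combined with $\Map(A\times_C B, Y)$ splitting as sections over $A$ of the relative mapping stack. Since $\wt X$ carries the $(d+d')$-Calabi--Yau structure of weight $m$ from part (i), Theorem~\ref{thm}(i) applied to $\wt X$ directly equips $\Map(\wt X,Y)$ with an $(n-d-d')$-shifted symplectic structure. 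Both constructions are just the AKSZ transgression $(\Id\times\int)\circ \ev^*$ against the same integration map on $\wt X$ (once one feeds through the base-change isomorphisms), so they agree; the $\BG_m$-equivariance and the weight-$m$ assertion are transported along the same isomorphisms, using as in the proof of Theorem~\ref{c2} that the relevant evaluation maps are $\BG_m$-equivariant and that $\sec_X(Y_{L^{\o m}})^{(m)}\simeq \sec_X(Y^{(m)}_L)$.

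The main obstacle is the coherence of the two AKSZ transgressions — i.e., proving that the symplectic structure obtained by ``first $S$, then $X$'' genuinely coincides, as a point of the space of closed $2$-forms and not merely up to equivalence of underlying forms, with the one obtained by ``$\wt X$ all at once.'' This requires knowing that the transgression construction is functorial enough to commute with the base-change isomorphisms of Lemma~\ref{p:rel forms} and with the composition of integration maps; concretely, one needs a Fubini-type statement $\int_{\wt X} = \int_X \circ\, (\text{relative }\int_S)$ at the level of the oriented-stack structures, compatibly with the identification $\Map(\wt X,Y)\simeq \sec_X(\Map(S,Y)_{K_X^{1/m}})$ and with the cartesian-square isomorphisms for closed forms. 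This is the analogue, in the present twisted/equivariant setting, of the associativity of the AKSZ construction that is implicit in \cite{PTVV}; I would either invoke it from there (in the $B\BG_m$-relative form, via Remark~\ref{r:compact oriented stack}(i)) or spell it out by noting that on underlying forms both sides are manifestly the same iterated integral of $\ev^*\omega$, and that the closure data match because all the maps involved ($\ev$, the projections, $\lambda$, $p_X$) fit into a single commuting diagram of Cartesian squares to which Lemma~\ref{p:rel forms} applies uniformly. The nondegeneracy on the Lagrangian/symplectic side is, by contrast, routine: it may be checked locally on $X$, where $K_X^{1/m}$ trivializes and the statement reduces to the untwisted $\wt X = X\times S/\BG_m$ case already covered by combining Theorem~\ref{thm} with the product formula for Calabi--Yau structures.
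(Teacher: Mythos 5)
Your proposal is correct and follows essentially the same route as the paper: part (i) is obtained by base change along the Cartesian square over $B\mathbb{G}_m$ and composing the weight-$m$ Calabi--Yau structure on $S$ with the integration map on $X$ twisted through the classifying map of $K_X^{1/m}$, and part (ii) by the mapping-stack adjunction $\Map(\tilde{X},Y)\simeq \sec_X(\Map(S,Y)_{K_X^{1/m}})$ together with compatibility of the two transgressions. The paper's proof is just a terser version of the same argument (asserting the compatibility ``by construction'' where you spell out the Fubini-type coherence), so no further comparison is needed.
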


\begin{proof}
We have the Cartesian square of stacks
$$
\xymatrix{
\tilde{X}\ar[r]\ar[d] & S/\mathbb{G}_m \ar[d]^\pi\\
X \ar[r]^l & B\mathbb{G}_m
}
$$
Therefore, by base change, we have
$$ \Gamma(\tilde{X}, \mathcal{O}_X) \simeq \Gamma(X, l^* \pi_*(\mathcal{O}_{S/\mathbb{G}_m})) .$$
The desired Calabi-Yau structure on $\tilde{X}$ is then given as the composition of Calabi-Yau structures on $S$ and $X$:
$$ \Gamma(X, l^* \pi_*(\mathcal{O}_{S/\mathbb{G}_m})) \to \Gamma(X, l^*(\mathbb{C}(m)[d'])) \to \Gamma(X, K_X[d']) \to \mathbb{C}[d+d'].$$

\medskip

Now, we have isomorphisms
$$ \Map(\tilde{X}, Y) \simeq \sec_X(\Map_{/X}(\tilde{X}, Y\times X)) \simeq \sec_X(\Map(S, Y)_{K^{1/m}}), $$
which by construction of the Calabi-Yau structure on $\tilde{X}$ are compatible with the $(n-d-d')$-shifted symplectic structures of weight $m$. 
\end{proof}

\section{The case of $G$-bundles} 
For any stack $\Y$ 
and an integer $n$, the $n$-shifted  cotangent stack  $T^*[n]\Y$ comes equipped with 
a natural $n$-shifted symplectic form, see \cite[Proposition 1.21]{PTVV} and also \cite{Ca2}.
This 2-form has weight 1 with respect
to the $\BG_m$-action on $T^*[n]\Y$  by dilations along the fibers of the cotangent bundle.
The zero section $\Y\into T^*[n]\Y$ has a natural Lagrangian structure.

One has a canonical isomorphism  $\g^*/G=T^*[1]BG$,
which provides the stack
$\g^*/G$ with a natural $1$-shifted symplectic structure of weight 1.

In what follows, it will be convenient to have another description of this
$1$-shifted symplectic stack as a mapping stack.  Recall that an $Ad$-invaraint nondegenerate symmetric 
bilinear form $\kappa$ on $\g$ gives a 2-shifted symplectic structure on the stack $BG$.  Now, let $S=\widehat{B\mathbb{G}_a}$, the formal completion of $B\mathbb{G}_a$ at a point, with its natural $\mathbb{G}_m$ action.  We have that
$\Gamma(S, \mathcal{O}_S) \simeq \mathbb{C}[\epsilon]$, where $|\epsilon|=1$ and the map $\mathbb{C}[\epsilon]\to \mathbb{C}[-1],$ given by $\epsilon \mapsto 1$
gives $S$ a 1-Calabi-Yau structure of weight 1.  We then have:

\begin{lem}
There is a canonical isomorphism of $1$-shifted symplectic stacks of weight 1
$$ \Map(S, BG) \simeq T^*[1]BG .$$
\end{lem}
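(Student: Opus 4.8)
The plan is to compute both sides as mapping stacks and match their symplectic structures using Theorem \ref{t:twisted CY}. First I would recall that $S = \widehat{B\BG_a}$ is a $1$-Calabi-Yau stack of weight $1$ (with the structure spelled out in the excerpt: $\Gamma(S,\oo_S)\simeq \C[\eps]$, $|\eps|=1$, $\eps\mapsto 1$). Applying Theorem \ref{t:twisted CY} in the ``absolute'' case where $X=\pt$ (equivalently, taking the base to be a point with its trivial orientation of degree $0$, so $K_\pt$ is trivial and no root extraction is needed), part (ii) gives a $\BG_m$-equivariant equivalence of $(n-1)$-shifted symplectic stacks $\Map(S, Y) \simeq \Map(\widetilde{\pt}, Y) = \Map(S/\BG_m \times_{B\BG_m} \pt, Y)$; more to the point, I want to apply it with $Y = BG$ equipped with its $\kappa$-induced $2$-shifted symplectic structure of weight $0$. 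Then Theorem \ref{t:twisted CY}(ii) identifies $\Map(S, BG)$ as a $(2-1)=1$-shifted symplectic stack of weight $1$ — the weight being picked up from the weight-$1$ Calabi-Yau structure on $S$.

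Next I would identify this mapping stack explicitly. Since $S = \widehat{B\BG_a}$ is the formal completion of $B\BG_a$ at the basepoint, a map $S \to BG$ is the data of a $G$-bundle on $S$, which (as $S$ is formally contractible with trivial reduced part) amounts to a point of $BG$ together with the infinitesimal/formal deformation data along the $\BG_a$-direction; concretely $\Map(S, BG)$ is the total space of the shifted tangent-type sheaf governing first-order deformations, which is precisely $T^*[1]BG$. The cleanest way to see this is to observe that $\Map(\widehat{B\BG_a}, BG) \simeq \Map(\widehat{B\BG_a}, \pt) \times_{\Map(\widehat{B\BG_a},\pt)} \cdots$ — or, more efficiently, to use the general principle that for $S$ with $\Gamma(S,\oo_S)\simeq \C[\eps]$ a square-zero (derived) thickening by a shifted line, $\Map(S, Y)$ is the total space of $\mathbb{L}_Y$ twisted by the shift, i.e. $T^*[1]Y$ when the relevant shift is $1$; here $Y = BG$, so $\Map(S, BG) \simeq T(\mathbb{L}_{BG}[1]) = T^*[1]BG$ as stacks, with the dilation $\BG_m$-action matching the weight-$1$ action on $S$.

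The remaining point is to check that this identification of underlying stacks upgrades to an identification of $1$-shifted symplectic structures of weight $1$. For this I would argue exactly as in the proof of Proposition \ref{p:cotangent}: the $1$-shifted symplectic form on $T^*[1]BG$ is the de Rham differential of the canonical $1$-shifted $1$-form, so it suffices to match the canonical $1$-forms. Under the AKSZ-type construction of Theorem \ref{t:twisted CY}, the $2$-shifted symplectic form $\kappa$ on $BG$ transgresses along the $1$-Calabi-Yau structure $\eps\mapsto 1$ of $S$ to exactly the canonical symplectic form on $T^*[1]BG$, because pairing the evaluation pullback of $\kappa$ against the class $\eps$ reproduces the tautological contraction defining the canonical $1$-form (this is the same integration-against-the-orientation computation as in Proposition \ref{p:cotangent}, with $X$ replaced by $S$ and $K_X^{1/m}$ trivial). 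Hence both the $2$-form and its closure data match, and the equivalence is one of $1$-shifted symplectic stacks of weight $1$.

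I expect the main obstacle to be the last step: verifying carefully that the transgression of $\kappa$ through the Calabi-Yau structure on $S = \widehat{B\BG_a}$ really lands on the \emph{canonical} $1$-shifted symplectic form (and not some twist of it), including tracking the weight-$1$ grading throughout and confirming the nondegeneracy isomorphism $\mathbb{L}_{T^*[1]BG}^\vee \iso \mathbb{L}_{T^*[1]BG}[1]$ is the standard one. The identification of underlying stacks $\Map(\widehat{B\BG_a}, BG)\simeq T^*[1]BG$ is essentially formal (deformation theory of $BG$ in the $\BG_a$-direction, governed by $\g[1]\oplus\g^*[-1]$-type data), so the content is entirely in the symplectic bookkeeping, which one handles by reducing, as in Proposition \ref{p:cotangent}, to matching canonical $1$-forms.
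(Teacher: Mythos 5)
Your overall strategy --- transgress the $2$-shifted form $\kappa$ on $BG$ through the weight-$1$ Calabi--Yau structure on $S$ and then identify the result with the canonical structure on $T^*[1]BG$ --- is the same as the paper's. But there is a genuine error at the step you call ``essentially formal.'' The mapping stack $\Map(\widehat{B\mathbb{G}_a}, Y)$ is the total space of the shifted \emph{tangent} complex, not the cotangent complex: for ordinary dual numbers $\Map(\Spec k[\eps]/\eps^2, Y)=TY$, and with $|\eps|=1$ one gets $T[-1]Y$. For $Y=BG$ this is $T[-1]BG\simeq \g/G$, not $\g^*/G=T^*[1]BG$; the deformation theory of a map $S\to BG$ is governed by $\g$ alone, with no $\g^*$ in sight. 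The identification $\g/G\simeq\g^*/G$ is therefore not formal --- it is exactly where the nondegenerate invariant form $\kappa$ enters, and checking that this $\kappa$-induced identification carries the transgressed form to the canonical one is the entire content of the lemma, not bookkeeping to be deferred.

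Relatedly, your plan to conclude ``exactly as in Proposition \ref{p:cotangent}'' by matching canonical $1$-forms does not transfer: there the source was already a shifted cotangent stack whose symplectic form is exhibited as $d$ of a tautological $1$-form, whereas the $2$-shifted form on $BG$ is given by a class $\kappa\in \left(Sym^2(\g^*)\right)^G$ sitting in the weight-$2$ part of the de Rham complex of $BG$ and is not presented as exact. What the paper actually does at this point is a direct computation in the weight-graded de Rham complex of $\g/G$: it shows that the composite
$$\left(\oplus_{i\geq 0} Sym^{2+i}(\g^*)[-2-2i]\right)^G \to \mathcal{A}^{2,cl}(BG)\to \mathcal{A}^{2,cl}(\g/G)[-1]$$
kills all higher components, so the only surviving piece of the transgressed closed $2$-form is the ``constant'' $2$-form given by the image of $\kappa$ in $\g^*\otimes\g^*\otimes\mathcal{O}_{\g}$; this is visibly the canonical form under the $\kappa$-identification $\g\simeq\g^*$. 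You correctly flagged this verification as the main obstacle, but the argument you sketch for it is not the one that closes the gap; you would need to carry out (or cite) this de Rham complex computation on $\g/G$.
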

\begin{proof}
We have a $\mathbb{G}_m$-equivariant isomorphism of derived stacks $\Map(S, BG) \simeq T[-1]BG \simeq \mathfrak{g}/G$.  Recall that the 2-shifted symplectic structure on $BG$ is given by the image of an $Ad$-invariant symmetric bilinear form $\kappa$ on $\mathfrak{g}^*$ under the natural map
$$ \left(\oplus_{i\geq 0} Sym^{2+i}(\mathfrak{g}^*)[-2-2i]\right)^G \to \mathcal{A}^{2,cl}(BG). $$
Unraveling the definitions, we have that the composite map
$$\left(\oplus_{i\geq 0} Sym^{2+i}(\mathfrak{g}^*)[-2-2i]\right)^G \to \mathcal{A}^{2,cl}(BG) \to \mathcal{A}^{2, cl}(\mathfrak{g}/G)[-1]$$
factors through the map
$$
\left( \oplus_{p+q \geq l} \Omega^p(\mathfrak{g}) \otimes_{\mathbb{C}} Sym^q(\mathfrak{g}^*)[2-p-2q]) \right)^G \to \mathcal{A}^{2,cl}(\mathfrak{g}/G),
$$
where the differential in the complex on the left is given by the sum of the internal differential and the deRham differential on $\mathfrak{g}$.  Thus, we obtain that the only nonzero component of the 1-shifted symplectic structure on $\mathfrak{g}/G$ is given by the image of $\kappa$ along the map
$$ Sym^2(\mathfrak{g}^*) \to \Omega^1(\mathfrak{g}) \otimes \mathfrak{g}^* \simeq \mathfrak{g}^* \otimes \mathfrak{g}^* \otimes \mathcal{O}_{\mathfrak{g}}.$$ 
It follows that the $\mathbb{G}_m$ equivariant identification $\mathfrak{g}/G \simeq \mathfrak{g}^*/G$ induced by $\kappa$ upgrades to an isomorphism of $1$-shifted symplectic stacks of weight 1.
\end{proof}

\medskip

The map  $0/G\to \g^*/G$, induced
by the  imbedding $\{0\}\into \g^*$,  may be identified with the zero section $\imath: BG\to T^*[1]BG$.

Let   $M$ be a smooth symplectic variety equipped with a Hamiltonian $G$-action.
It was observed by Calaque  \cite{Ca}, that the map
 $M/G \to \g^*/G $,  induced by
the moment map  $\mu: M \to \g^* $, has a  natural Lagrangian structure.
Hence, from Theorem  \ref{c2} in the special case where $Y=\g^*/G$ and $n=1$ we deduce the following
result.

\begin{cor}\label{T^*Bun}  \vi For any  $m\geq1$, the stack $\sec_X(\g^*_{K^{1/\m}_X}/G)$ has  a canonical  $(1-d)$-shifted symplectic structure 
 structure of weight $m$.

\vii For a smooth symplectic $G\times \BG_m$-variety $M$ such that the
action of the group $G$ is hamiltonian and the symplectic 2-form has weight $\ell\ge1$ with respect
to the $\BG_m$-action,
the map $\sec_X(M_{K^{1/m\ell}_X})\to \sec_X(\g^*_{K^{1/\m}_X}/G)$, induced by the moment map
$M\to\g^*$, has a natural Lagrangian structure of weight $\ell$.
\end{cor}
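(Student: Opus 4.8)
The plan is to deduce Corollary \ref{T^*Bun} directly from Theorem \ref{c2} by plugging in the right $\BG_m$-stack $Y$ and the right Lagrangian map. For part (i), I would take $Y=\g^*/G$ with its $1$-shifted symplectic structure of weight $1$ coming from the identification $\g^*/G=T^*[1]BG$ recalled at the start of this section (the $\BG_m$-action being dilation along the fibers, which rescales the canonical form with weight $1$). Applying Theorem \ref{c2}(i) with $n=1$, $m$ arbitrary $\geq 1$, and $d=\dim X$ gives that $\sec_X\bigl((\g^*/G)_{K^{1/m}_X}\bigr)$ carries a natural $(1-d)$-shifted symplectic structure of weight $m$. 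It only remains to observe that $(\g^*/G)_{K^{1/m}_X}\cong \g^*_{K^{1/m}_X}/G$, which is exactly the third item of the Remarks in Section 1; so this is essentially immediate.

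For part (ii), the input is a smooth symplectic $G\times\BG_m$-variety $M$ with Hamiltonian $G$-action and symplectic form of weight $\ell\geq 1$. The moment map $\mu\colon M\to\g^*$ intertwines the $\BG_m$-action on $M$ with dilation by $t^\ell$ on $\g^*$ (as noted in Section 1), so the induced map $M/G\to\g^*/G$ is a $\BG_m$-equivariant map of weight $\ell$, i.e. a map $M/G\to (\g^*/G)^{(\ell)}$. By Calaque's result \cite{Ca} cited just above, this map carries a natural Lagrangian structure, and since the moment-map condition is $\BG_m$-equivariant in the stated sense, this upgrades to an \emph{equivariant} Lagrangian structure of weight $\ell$ on $f\colon M/G\to\g^*/G$ in the sense of the Definition in Section \ref{s:equiv}. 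Now apply Theorem \ref{c2}(ii) with $Y=\g^*/G$, $Z=M/G$, $n=1$: the induced map
\[ \sec_X\bigl((M/G)_{K^{1/m\ell}_X}\bigr)\too \sec_X\bigl((\g^*/G)_{K^{1/m}_X}\bigr) \]
acquires a natural Lagrangian structure of weight $\ell$. Finally, rewriting $(M/G)_{K^{1/m\ell}_X}\cong M_{K^{1/m\ell}_X}/G$ and $(\g^*/G)_{K^{1/m}_X}\cong \g^*_{K^{1/m}_X}/G$, again by the Remarks of Section 1, identifies this with the map $\sec_X(M_{K^{1/m\ell}_X})\to\sec_X(\g^*_{K^{1/m}_X}/G)$ induced by $\mu$, which is the assertion of Corollary \ref{T^*Bun}(ii).

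The only genuinely non-formal point is checking that Calaque's Lagrangian structure on $M/G\to\g^*/G$ is compatible with the $\BG_m$-weight bookkeeping, i.e. that the nullhomotopy of $f^*\om$ lives in the space of closed $2$-forms of the correct weight ($1$ on $\g^*/G$, hence weight-$\ell$ after the reparametrization $t\mapsto t^\ell$ on $\g^*$). This follows because the whole construction of the moment-map Lagrangian structure — the $1$-form $\tfrac12\langle\mu,\theta\rangle$ or the homotopy realizing $d$ of it, in the relevant model — is built from $\mu$ and the symplectic form on $M$, both of which have been assigned the weight $\ell$; so the construction is manifestly $\BG_m$-equivariant and the weight is $\ell$ throughout. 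I would therefore regard the main (and essentially only) obstacle as making this weight-tracking precise, and everything else as a bookkeeping consequence of Theorem \ref{c2} together with the identifications $(\Y/G)_L\cong\Y_L/G$.
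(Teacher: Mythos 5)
Your proposal is correct and follows essentially the same route as the paper: the paper likewise obtains the corollary by specializing Theorem \ref{c2} to $Y=\g^*/G$ with $n=1$, using the $1$-shifted symplectic structure of weight $1$ from $\g^*/G=T^*[1]BG$ for part (i) and Calaque's Lagrangian structure on the moment map $M/G\to\g^*/G$ for part (ii). Your extra care with the weight bookkeeping and the identifications $(\Y/G)_L\cong\Y_L/G$ only makes explicit what the paper leaves implicit.
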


We now specialize to the case where $\X=X$ is a smooth projective curve.
The stack   of Higgs bundles on $\X$ is defined
as $\higgs_G(\X):=\Map(\X_{Dol}, BG)$, where  $\X_{Dol}$ is the  Dolbeault stack, see \cite{PTVV}.  
Since $d=\dim \X=1$,  the stack $\higgs_G(\X)$ is  equipped with a
0-shifted symplectic  structure, by  \cite[Corollary 2.6(2)]{PTVV}.
 
\begin{lem}\label{l:tBG}
There are natural isomorphisms of 0-shifted symplectic stacks
$$ \Map(\X_{Dol}, BG) \simeq \sec_{\X}(\mathfrak{g}^*_K/G) \simeq T^*\Bun_G(\X) .$$
\end{lem}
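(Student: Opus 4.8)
The plan is to establish the two isomorphisms separately, then check compatibility with the symplectic structures. The first isomorphism, $\Map(\X_{Dol}, BG)\simeq \sec_\X(\g^*_K/G)$, should follow from Theorem \ref{t:twisted CY}(ii) applied with $S = \X_{Dol}/\X$ suitably interpreted, or more directly from the observation that $\X_{Dol}$ is, fiberwise over $\X$, a copy of $B\mathbb{G}_a$ (the de Rham stack of the formal disk in the Dolbeault direction), twisted by $K_\X$. Concretely, I would use the description $\g^*/G = T^*[1]BG \simeq \Map(\widehat{B\mathbb{G}_a}, BG)$ from the Lemma just proved, together with the fact that $\X_{Dol} = \X \times_{B\mathbb{G}_m} (\widehat{B\mathbb{G}_a}/\mathbb{G}_m)$ where the map $\X \to B\mathbb{G}_m$ classifies $K_\X$ (here $d=1$ and $m=1$, so $K_\X^{1/m} = K_\X$). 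Then Theorem \ref{t:twisted CY}(ii) with $Y = BG$, $S = \widehat{B\mathbb{G}_a}$, $d' = 1$, $n=2$ gives precisely $\Map(\X_{Dol}, BG)\simeq \sec_\X(\Map(\widehat{B\mathbb{G}_a},BG)_{K_\X}) = \sec_\X(\g^*_K/G)$ as $0$-shifted symplectic stacks, where the $2$-shifted symplectic structure on $BG$ comes from a choice of nondegenerate invariant form $\kappa$ on $\g$.

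For the second isomorphism, $\sec_\X(\g^*_K/G)\simeq T^*\Bun_G(\X)$, I would invoke Proposition \ref{p:cotangent} with $Z = BG$ and $n=1$: since $\g^*/G \simeq T^*[1]BG$ carries its canonical $1$-shifted symplectic structure of weight $1$, the proposition directly yields $\sec_\X((T^*[1]BG)_{K_\X}) \simeq T^*[1-1]\Map(\X, BG) = T^*\Map(\X,BG) = T^*\Bun_G(\X)$, with matching $0$-shifted symplectic structures (transgression of the canonical $1$-form to the canonical $1$-form). This is essentially a direct citation of the already-proven Proposition \ref{p:cotangent}, so little work is needed beyond unwinding $\Bun_G(\X) = \Map(\X, BG)$.

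The main obstacle is reconciling the two descriptions of the symplectic structure on $\sec_\X(\g^*_K/G)$: one coming via Theorem \ref{t:twisted CY} from the $\kappa$-twisted $2$-shifted symplectic form on $BG$ (AKSZ/transgression along $\X_{Dol}$), the other coming via Proposition \ref{p:cotangent} from the canonical cotangent symplectic form on $T^*[1]BG$. The Lemma preceding this one already does the key local computation — it shows that under the $\kappa$-induced identification $\g/G \simeq \g^*/G$, the $2$-shifted symplectic structure on $BG$ transgresses to exactly the $1$-shifted cotangent symplectic structure on $T^*[1]BG$. So the compatibility I need is obtained by transgressing that identity one step further along $\X$ (equivalently, applying the functoriality built into Theorems \ref{c2} and \ref{t:twisted CY}); the content is checking that the two transgression procedures — first along the $B\mathbb{G}_a$-direction then along $\X$, versus directly along $\X_{Dol}$ — agree, which is a form of "transgression is associative/compatible with composition of oriented stacks" and should follow formally from the constructions in \cite{PTVV}.

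Finally I would remark that all three stacks carry weight-$1$ $\mathbb{G}_m$-actions (fiberwise dilation, Higgs field scaling, cotangent scaling) and that the isomorphisms are $\mathbb{G}_m$-equivariant, which is what licenses the later application where we restrict to the regular locus and deduce Theorem \ref{c1}. In short, the proof is: identify $\X_{Dol}$ as the $K_\X$-twist of $\widehat{B\mathbb{G}_a}$ over $\X$, apply Theorem \ref{t:twisted CY}(ii) for the left isomorphism, apply Proposition \ref{p:cotangent} for the right isomorphism, and use the preceding Lemma's local computation (transgressed along $\X$) to match the symplectic forms.
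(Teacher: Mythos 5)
Your proposal matches the paper's proof: the paper likewise identifies $\X_{Dol}$ with $\X \times_{B\mathbb{G}_m} S$ for $S=\widehat{B\mathbb{G}_a}$ (the map classifying $K_\X$), notes this is an isomorphism of $1$-Calabi--Yau stacks, and then cites Theorem \ref{t:twisted CY}(ii) for the first isomorphism and Proposition \ref{p:cotangent} for the second, with the preceding Lemma supplying the identification $\Map(S,BG)\simeq T^*[1]BG$ that makes the two symplectic structures agree. Your extra paragraph on reconciling the two transgressions is a more explicit spelling-out of what the paper compresses into ``by construction of the Calabi--Yau structure,'' but the route is the same.
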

\begin{proof}
By definition, $\X_{Dol}$ is identified with $X \underset{B\mathbb{G}_m}{\times} S,$ where the map $X \to B\mathbb{G}_m$
classifies $K_X$.  Moreover, by construction of the Calabi-Yau structure in Theorem \ref{t:twisted CY}(i), this isomorphism gives an isomorphism of 1-CY stacks.
The first, resp. second,  isomorphism of the lemma
then follows from Theorem \ref{t:twisted CY}(ii), resp. Proposition \ref{p:cotangent}.
\end{proof}
 
% The  stack $T^*\Bun_G(\X)$ has a canonical
%  0-shifted symplectic structure.
%
%
% It is immediate from definitions   that the natural isomorphism  $T^*\Bun_G(\X)\cong \sec_\X(\g^*_K/G)$ respects the symplectic structures.
%
% There is also  a natural isomorphism $\higgs_G(\X)\cong \sec_\X(\g^*_K/G)$.
%
%
% \begin{lem} The isomorphism $\higgs_G(\X)\cong \sec_\X(\g^*_K/G)$ respects
% the canonical $0$-shifted symplectic structure on each side.
% \end{lem}
% \begin{proof}
% Let $S:=\Spec(\C[\eps]/\eps^2) = B\BG_a$, where $\eps$ is placed in degree 1.
% The symmetric pairing on $\C[\eps]/\eps^2)$ given by $\eps\times\eps\mto 1$
% provides $S$ with a  1-CY structure.
%  The 2-shifted symplectic structure on $BG$
% yields, by Theorem \ref{thm}, a 1-shifted symplectic structure on $\Map(S, BG)$.
% The grading on  the algebra $\C[\eps]/\eps^2)$ gives a $\BG_m$-action on
% $S$ and an induced   $\BG_m$-action on  $\Map(S, BG)$.
% The  1-CY structure on $S$ has  weight $1$ with respect to the $\BG_m$-action.
% Therefore, the  1-shifted symplectic structure on $\Map(S, BG)$  has  weight $2$.
% We have a $\BG_m$-equivariant  isomorphism $T^*[1] G = \Map(S, BG)$
% that respects the  1-shifted symplectic structures.
%
%
% For a curve $\X$, one has natural isomorphisms
% $\X_{Dol} = \X/T_\X = \sec_\X (S_K)$, of $\BG_m$-stacks,
% where $K=K_\X$ and $T_\X$ is the tangent sheaf.
% Thus, we obtain
% \begin{multline*}
%   \higgs_G(\X) := \Map(\X_{Dol}, BG) = \Map(\sec_\X (S_K), BG) = \sec_\X(\Map(S, BG)_K) \\
% = \sec_\X(T^*[1]BG)_K =\sec_\X((\g^*/G)_K))=\sec_\X(\g^*_K/G).\qquad
% \qedhere
% \end{multline*}
% \end{proof}

Using the above lemma, from Corollary \ref{T^*Bun} we deduce

\begin{thm}\label{cor} Let  $M$ be a smooth symplectic $G\times \BG_m$-variety such that the
action of the group $G$ is hamiltonian and the symplectic 2-form has weight $\ell\geq 1$ with respect
to the $\BG_m$-action. Then, 
the map 
\beq{cor-sec}
\sec_\X(M_{K^{1/\ell}_\X}/G)\too \sec_\X(\g^*_{K_\X}/G)= T^*\Bun_G(\X),
\eeq
 induced by the moment map
$M\to\g^*$, has a natural Lagrangian structure of weight $\ell$.
\end{thm}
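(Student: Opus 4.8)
The plan is to deduce Theorem~\ref{cor} by specializing the general machinery assembled above, with essentially all the work already done in Corollary~\ref{T^*Bun} and Lemma~\ref{l:tBG}. First I would recall that, by Calaque's observation, the moment map induces a Lagrangian structure on $M/G\to\g^*/G=T^*[1]BG$; here $M$ is regarded as a $G\times\BG_m$-variety and, since the symplectic form on $M$ has weight $\ell\geq 1$ and the canonical $1$-shifted symplectic form on $\g^*/G=T^*[1]BG$ has weight $1$ (dilation along the cotangent fibers), the moment map $\mu\colon M\to\g^*$ is homogeneous of weight $\ell$ in the sense of Section~\ref{s:equiv}. Thus $M/G\to\g^*/G$ is naturally a Lagrangian structure of weight $\ell$ over the $1$-shifted symplectic stack $Y=\g^*/G$ of weight~$1$.

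Next I would apply Theorem~\ref{c2}(ii) with $Y=\g^*/G$, $Z=M/G$, $n=1$, $m=1$, and the weight-$\ell$ Lagrangian $f\colon M/G\to\g^*/G$. Since $d=\dim\X=1$ and $K_\X^{1/m}=K_\X^{1/1}=K_\X$ on the base, and $K_\X^{1/(\ell m)}=K_\X^{1/\ell}$ on the total space, the theorem produces a Lagrangian structure of weight $\ell$ on the induced map
\[
\sec_\X\bigl((M/G)_{K_\X^{1/\ell}}\bigr)\too \sec_\X\bigl((\g^*/G)_{K_\X}\bigr),
\]
carried by the $(1-d)$-shifted, i.e.\ $0$-shifted, symplectic structure of weight~$1$ on the target. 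Using the identification of Remark~\ref{r:compact oriented stack}(iii) (or rather the Remarks following the definition of $\sec_X$), $(M/G)_{K_\X^{1/\ell}}=(M_{K_\X^{1/\ell}})/G$, so the source is $\sec_\X(M_{K_\X^{1/\ell}}/G)$, and likewise the target is $\sec_\X(\g^*_{K_\X}/G)$. This is precisely Corollary~\ref{T^*Bun}, which I would simply invoke.

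Finally I would invoke Lemma~\ref{l:tBG}, which identifies $\sec_\X(\g^*_{K_\X}/G)$ with $T^*\Bun_G(\X)$ as $0$-shifted symplectic stacks, thereby rewriting the target of \eqref{cor-sec} and completing the identification of the map in the statement with the one produced by Corollary~\ref{T^*Bun}. The Lagrangian structure of weight $\ell$ transports along this isomorphism, which finishes the proof.

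There is essentially no new obstacle: everything is a matter of unwinding the identifications and checking that the weights match ($n=1$, $m=1$, Lagrangian weight $\ell$, base dimension $d=1$). The one point that deserves a sentence of care is the bookkeeping of the two distinct roots of $K_\X$ that appear — $K_\X=K_\X^{1/m}$ used to twist the base $\g^*/G$, versus $K_\X^{1/\ell}=K_\X^{1/(\ell m)}$ used to twist $M/G$ — and the application of the isomorphism $\sec_X(Y_{L^{\otimes\ell}})^{(\ell)}\simeq\sec_X(Y^{(\ell)}_L)$ from the proof of Theorem~\ref{c2} that reconciles them with the weight-$\ell$ structure; but this is exactly what Theorem~\ref{c2}(ii) already packages, so for the proof of Theorem~\ref{cor} it suffices to cite Corollary~\ref{T^*Bun} and Lemma~\ref{l:tBG} directly.
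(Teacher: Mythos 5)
Your proposal is correct and matches the paper's own argument essentially verbatim: the paper likewise deduces Theorem~\ref{cor} by specializing Corollary~\ref{T^*Bun}(ii) (itself an application of Theorem~\ref{c2} with $Y=\g^*/G$, $n=1$, $m=1$, and Calaque's weight-$\ell$ Lagrangian $M/G\to\g^*/G$) to $m=1$ and then invoking Lemma~\ref{l:tBG} to identify $\sec_\X(\g^*_{K_\X}/G)$ with $T^*\Bun_G(\X)$ as $0$-shifted symplectic stacks. Your extra care with the weight bookkeeping and the two roots of $K_\X$ is consistent with what the cited results package, so nothing further is needed.
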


To complete the proof of
Theorem \ref{c1} one observes that, on the locus $T^*\Bun_G(\X)^{reg}$ where   $T^*\Bun_G(\X)$ is a smooth variety,
 the  0-shifted symplectic 2-form is nothing but the standard  symplectic 2-form  $\om$ on  $T^*\Bun_G(\X)^{reg}$ in the ordinary sense. 
Similarly,   
 if $\Lambda$ is  a smooth variety and
a map $f: \Lambda\to T^*\Bun_G(X)^{reg}$ has a Lagrangian structure then one has
$f^*\om=0$. Thus, Theorem \ref{c1} follows from Theorem \ref{cor}.

\section{Additional comments and speculations}
\subsection{A generalization of Gaiotto's  argument} 
In the  linear case, an  `infinite dimensional' approach to Theorem \ref{c1} 
is explained in  \cite{Ga}. Gaiotto's approach is
based on a standard differential geometric interpretation of
$\Bun_G(\X)$ as a quotient of an  infinite dimensional space of $\bar\partial$-connections by a gauge group.
It was suggested  to us by Gaiotto that the argument in  \cite{Ga}  can be adapted to the more general, nonlinear setting of
Theorem \ref{c1} as follows. Below, we assume that $\ell=2$, for simplicity.

Fix   a  principal $C^\infty$-bundle $P\xrightarrow{G}\X$ and
let $\textit{Conn}_{\bar\partial}(P)$ be (an infinite dimensional) space
of  $\bar\partial$-connections on $P$. 
Further, let $\sec_{\X,C^\infty}(M_{K_ X^{1/2}}\times_G P)$
be  (an infinite dimensional) space of $C^\infty$-sections of an associated bundle $M_{K_ X^{1/2}}\times_G P\to\X$.
Let
$z\in \sec_{\X,C^\infty}(M_{K_ X^{1/2}}\times_G P)$ be such a section and
$ A\in \textit{Conn}_{\bar\partial}(P)$
a $\bar\partial$-connection.
 Then $\nabla_{\!_{ A}}z$, a covariant derivative of $z$ with respect to $  A$,
is a $C^\infty$-section of $z^*T_M\o K_\X^{1/2}\o\Om^{0,1}_\X$, where $T_M$ stands for the holomorphic tangent
sheaf on $M$ and $\Om^{p,q}_\X$ is the sheaf of $C^\infty$ differential forms on $\X$ of type $(p,q)$.
Further,  let $\la_M=i_{\eu_M}\om_M$,
where $\om_M$ is the (holomorphic) symplectic form on $M$ and $\eu_M$ is the Euler field
that generates the $\BG_m$-action on $M$.
Thus, $z^*\la_M$ is a $C^\infty$-section of $z^*T^*_M\o K_\X^{1/2}$. Using
the  canonical pairing $\langle -,-\rangle$ of holomorphic vector fields and  holomorphic 1-forms on $M$,
we obtain a $C^\infty$-section $\langle \nabla_{\!_{ A}}z,z^*\la_M\rangle$
of the sheaf $K_\X^{1/2}\o\Om^{0,1}_\X\o K_\X^{1/2}=K_\X\o \Om^{0,1}_\X=\Om^{1,1}_\X$.

In the above setting, the role of  the potential   from \cite[formula (2.3)]{Ga} is played by
a function on $\sec_{\X,C^\infty}(M_{K_ X^{1/2}}\times_G P)\,\times \,\textit{Conn}_{\bar\partial}(P)$ defined by
the formula
\beq{W}W(z, A)=\int_\X \langle \nabla_{\!_{ A}}z,z^*\la_M\rangle.
\eeq

To prove  that the map $\mu_{\sec}$ in Theorem \ref{c1} is Lagrangian
we show, by a calculation similar to the one in \cite[Appendix A]{Ga}, that \eqref{W} is a generating function
(aka `Lagrange multiplier') for $\sec_\X( M_{K_ X^{1/2}}/G)$.

To this end, observe that an infinitesimal variation of $z$ is given by a section $\overset{.}{z}$ of ~$z^*T_M\o K_ X^{1/2}$.
The corresponding variation of the $(1,1)$-form $\langle \nabla_{\!_{ A}}z,z^*\la_M\rangle$
reads
$$\frac{\delta \langle \nabla_{\!_{ A}}z,z^*\la_M\rangle}{\delta z}(\overset{.}{z})=\bar\partial\langle \nabla_{\!_{ A}}\overset{.}{z},z^*\la_M\rangle+
(d\la_M)(\nabla_{\!_{ A}}z,\overset{.}{z}),
$$
where the operator  $\bar\partial$ that appears  in the first summand on the right
is  the Dolbeault differential $\bar\partial: \Om^{1,0}_\X\to \Om^{1,1}_\X$.
Using that $d\la_M=\om_M$ and that, on $\Om^{1,0}_\X$, one has $\bar\partial=d$, we find that the 
variation of \eqref{W} equals
$$\frac{\delta W}{\delta z}(\overset{.}{z})=\int_\X d\langle \nabla_{\!_{ A}}\overset{.}{z},z^*\la_M\rangle+
\int_\X \om_M(\nabla_{\!_{ A}}z,\overset{.}{z}).
$$
The first summand on the right vanishes by Stokes' theorem. Hence, the form $\om_M$ being nondegerate,  
we deduce that the equation $\frac{\delta W}{\delta z}(\overset{.}{z})=0$ holds for all $\overset{.}{z}$
if and only if $\nabla_{\!_{ A}}z=0$, that is, if and only if the section $z$ is holomorphic with respect to the complex
structure on $M_{K_ X^{1/2}}\times_G P$ determined by the $\bar\partial$-connection $A$.

Next, let $\overset{\ .}{A}$ be an infinitesimal variation  of $A$.
Then, 
it is easy to check that $\frac{\delta W}{\delta A}(\overset{\ .}{A})=\mu_\sec(z,A)(\overset{\ .}{A})$,
proving that $W$ is a generating function for $\sec_\X( M_{K_ X^{1/2}}/G)$.

\begin{rem}
Let $\eu$,  resp.  $\eu_\sec$, be the Euler vector field on 
$T^*\Bun_G(\X)$, resp. $\sec_\X(M_{K_ X^{1/2}}/G)$, that generates the $\BG_m$-action. 
Recall that  $\om=d\la$ where $\la=i_{\eu}\om$ is the Liouville 1-form  on $T^*\Bun_G(\X)$.
The map $\mu_\sec$ in \eqref{mumap} being of weight $\ell$, one finds:
$$\mu_{\sec}^*(\la)=\mu_{\sec}^*(i_{\eu}\om)=\mbox{$\frac{1}{\ell}$}\cdot i_{\eu_{\sec}}\mu^*(\om).$$
It follows, as has been observed by Hitchin \cite{Hi}, that Theorem \ref{cor}  is equivalent to the equation
 ~$\mu_{\sec}^*(\la)=0$.
\end{rem}

\subsection{Relation to the global nilpotent cone}
Let  $B$ be a Borel subgroup of $G$, so $G/B$ is the flag variety.
The symplectic form on $T^*(G/B)$ has weight 1 and the moment map
$\mu: T^*(G/B)\to\g^*$ is  the Springer resolution $T^*(G/B)\to\N$, where
$\N\sset \g^*$ is the  nilpotent cone. 
The stack $\sec_\X((\N/G)_{K_\X})$ can be identified with $\N_\X$, the 
{\em global nilpotent cone} in $T^*\Bun_G(\X)$.
Further, the stack 
  $\sec_\X((T^*(G/B)_{K_\X}/G)$  can be identified with $T^*\Bun_B(\X)$. Explicitly,
writing $\n$ for the nilradical of $\Lie B$,  the stack  $T^*\Bun_B(\X)$ classifies triples $(P,\sigma,\phi)$, where $P$ is a $G$-bundle on $\X$,
$\sigma: \X\to P/B$ is a section, i.e. a reduction of $P$ to a $B$-bundle, and $\phi: P\times_B\n\to (P\times_B\n)\o K_\X$
is a Higgs field. Assume that the genus of the curve $\X$ is greater than 1.
Then, the derived  stacks $T^*\Bun_G(\X)$ and $T^*\Bun_B(\X)$ are concentrated in homological degree 0, i.e. they are
actually non-derived stacks. 
The stack  $\N_\X$ is not concentrated in homological degree 0,
and one can consider  $\N_\X^{\textrm{\tiny classical}}$, its  non-derived counterpart,
which is an ordinary substack
of  $T^*\Bun_G(\X)$.

The  map $(P,\sigma,\phi)\mto (P,\phi)$, that forgets  reduction of the structure group,  may be identified
with the composition
\beq{mn}\mu_\sec: T^*\Bun_B(\X)\
\xrightarrow{\pi_1}\ \N_\X\
\xrightarrow{\pi_2}\  T^*\Bun_G(\X).
\eeq
The map $\mu_\sec$  has  a Lagrangian structure by  Corollary  \ref{T^*Bun}.
One can show that the map
$\pi_2$ has a natural coisotropic structure  in the sense of \cite{MS}.
However, this coisotropic structure is easily seen to be {\em not} Lagrangian.

On the other hand, it was shown in \cite{Gi} that, for any field extension $K/k$,
the  map  $\pi_1^{\textrm{\tiny classical}}:\ T^*\Bun_B(\X)(\Spec K)\to \N_\X^{\textrm{\tiny classical}}(\Spec K)$, of $K$-points
of the corresponding {\em non-derived} stacks, is surjective.
This result  was used in \cite{Gi} to prove 
 that $\N_\X^{\textrm{\tiny classical}}$ is  (as opposed to its derived analogue) a Lagrangian substack of  $T^*\Bun_G(\X)$
in the sense  explained in {\em loc cit}. 
\bigskip

More generally, let $\wt Y\to Y$ be a $(G\times\BG_m)$-equivariant symplectic resolution such that 
$Y$ is affine,  the $\BG_m$-action on $Y$ is a contraction to a unique 
$\BG_m$-fixed point and, moreover, the
symplectic form on $\wt Y$ has weight $m\geq 1$.
Then,  we have  $k[\wt Y]=k[Y]$, so the  Poisson bracket on the algebra $k[\wt Y]$ 
provides $Y$ with a $(G\times\BG_m)$-equivariant Poisson structure. Also, the moment map $\wt Y/G\to\g^*/G$ factors through $Y/G$.
Therefore, there is a chain of induced maps
$\dis\sec_\X((\wt Y/G)_{K^{1/m}_\X})\xrightarrow{\pi_1}\sec_\X(Y_{K^{1/m}_\X}/G)\xrightarrow{\pi_2} T^*\Bun_G(\X)$
such that $\pi_2\ccirc\pi_1=\mu_\sec$. The map $\mu_\sec$ has  a Lagrangian structure, by Theorem \ref{cor}.
Again,  one can show that the  map $\pi_2: \sec_\X(Y_{K^{1/m}_\X}/G)\xrightarrow{\pi_2} T^*\Bun_G(\X)$
has a natural coisotropic structure.

\begin{quest} Is $\sec_\X(Y_{K^{1/m}_\X}/G)^{\textrm{\tiny classical}}$,
a non-derived counterpart of $\sec_\X(Y_{K^{1/m}_\X}/G)$, isotropic in the sense of
\cite{Gi}, specifically, is it possible to partition $\sec_\X(Y_{K^{1/m}_\X}/G)^{\textrm{\tiny classical}}$ as a disjoint union
of   substacks such that the pull-back of the symplectic 2-form on $T^*\Bun_G(\X)$ to each of these substacks vanishes ?
\end{quest}

\subsection{Hamiltonian reduction}
Let $M$ be  a stack equipped with a  0-shifted symplectic structure
and with a Hamiltonian $G$-action
with moment map $\mu$. The stack $\mu\inv(0)/G$,
 a stacky Hamiltonian reduction of $M$, comes equipped with a canonical  0-shifted symplectic structure.
On the other hand, 
let $\Lambda_1=0/G\to \g^*/G$ be the map induced by the imbedding
$\{0\}\into \g^*$ and $\Lambda_2=M/G\to \g^*/G$ be the map induced by $\mu$.
One has a natural isomorphism  
\beq{ham-red}\Lambda_1 \times_{\g^*/G}\Lambda_2=0/G\,\times_{\g^*/G}\, M/G\cong \mu\inv(0)/G.
\eeq
Recall that the stack $\g^*/G$ has the canonical $1$-shifted symplectic structure
and each of the two maps $\Lambda_i\to \g^*/G,\ i=1,2$, has a Lagrangian structure, cf. \S4.
Further, according to \cite[Theorem 0.5]{PTVV},
for any stack $\Y$ equipped with an $n$-shifted symplectic structure
and a pair $\Lambda_i\to \Y,\ i=1,2$, of Lagrangian structures,
the stack $\Lambda_1\times_\Y\Lambda_2$ has a natural $(n-1)$-shifted symplectic structure.
Therefore, the stack 
$0/G\,\times_{\g^*/G}\, M/G$ comes equipped with a  0-shifted symplectic structure.
It was shown by Calaque \cite{Ca} that the isomorphism in \eqref{ham-red}
respects the   0-shifted symplectic structures.

Next, we   fix a smooth projective curve $\X$ and let  $K=K_\X$. The stack
$\g^*_K/G=T^*\Bun_G(\X)$, a global counterpart of   $\g^*/G$,
has the  0-shifted symplectic structure of weight 1. Also,
the   Lagrangian structure on the map $0/G\to \g^*/G$
induces, for any $\ell$, a weight $\ell$  Lagrangian structure  $\sec_\X ((0/G)_{K^{1/\ell}})\to \sec_\X ((\g^*/G)_K)$.
The latter  Lagrangian structure corresponds,
via the isomorphisms $T^*\Bun_G(\X)\cong \g^*_K/G$ and $Bun_G(\X)\cong  \sec_\X ((0/G)_K)$,
to  an obvious  Lagrangian structure on 
the zero section $\Bun_G(\X)\to T^*\Bun_G(\X)$.
\big(We have used here that for any variety $\Y$ equipped with a trivial $\BG_m$-action and any
$\BG_m$-bundle $L$ on $\X$, one has $\sec_X(\Y_L)=\Map(\X,\Y)$, in particular,
we have $\sec_\X ((0/G)_K)=\Map(\X,BG)=Bun_G(\X)$.\big)

Now,
let $M$ be a symplectic manifold equiped with a $(G\times\BG_m)$-action
such that the symplectic 2-form has weight  $\ell\geq1$ and
the $G$-action is Hamiltonian. 
One has canonical isomorphisms
\begin{align}
\sec_\X ((0/G)_{K^{1/\ell}})\ &\times_{T^*\Bun_G(\X)}\ \sec_\X ((M/G)_{K^{1/\ell}})\label{iso}\\
&\cong\ \sec_\X \big((0/G)_{K^{1/\ell}}\,\times_{\g^*_{K}/G}\, (M/G)_{K^{1/\ell}}\big)\ \cong\ \sec_\X((\mu\inv(0)/G)_{K^{1/\ell}}).\nonumber
\end{align}
Here,  the fiber product on the left involves the map \eqref{cor-sec}, which
has a weight $\ell$ Lagrangian structure, by Theorem \ref{cor}.
Thus, according to \cite[Theorem 0.5]{PTVV},
the fiber product of Lagrangians 
on the left of \eqref{iso}  has a $(-1)$-shifted symplectic structure. On the other hand,
the 
0-shifted symplectic structure
on $\mu\inv(0)/G$ induces, by Theorem \ref{c2}(i), a  $(-1)$-shifted symplectic structure 
of weight $\ell$ on
$\sec_\X((\mu\inv(0)/G)_{K^{1/\ell}})$, the stack on the right  of \eqref{iso}.
One can check that the composite isomorphism
in \eqref{iso} respects the $(-1)$-shifted symplectic structures described above.

Let $\XX$ be a stack and assume there is  a line bundle 
$K^{1/2}_\XX$, a square root of the dualizing complex of $\XX$.  
In \cite{Pr}, Pridham shows that  an $(-1)$-shifted symplectic structure on  $\XX$ gives rise
to a canonical self-dual quantization of $K^{1/2}_\XX$. Moreover, associated with that quantization,
there is a constructible complex on $\XX$, of  vanishing cycles. Therefore, one might expect that,
in the setting of the previous paragraph, 
the stack $\sec_\X((\mu\inv(0)/G)_{K^{1/\ell}})$ comes equipped (perhaps,
under some additional assumptions) with a natural constructible complex 
 of  vanishing cycles. 

The linear case, where  $\ell=2$ and $M$ is a linear symplectic representation of $G$,
has been considered in the physics literature in the framework of Coulomb branches for 3-dimensional gauge theories,
cf. \cite{Ga} and references therein. The special case where $M=E\oplus E^*$ is a direct sum
of a pair of dual representations of  $G$ is  simpler than the general case. In that case, 
the geometry of $\sec_\X((\mu\inv(0)/G)_{K^{1/2}})$ can be reduced, in a sense,
to the geometry of $\sec_\X(E_{K^{1/2}})$. Such a  reduction allows to avoid the use of vanishing cycles. 
A mathematical  theory of Coulomb branches in the case $M=E\oplus E^*$ was developed
by H. ~Nakajima \cite{Na}, cf. also \cite{BFN}.

{\small
\bibliographystyle{plain}

}
\end{document}